\documentclass[11pt,dvips,twoside,letterpaper]{article}
\usepackage{pslatex}
\usepackage{fancyhdr}
\usepackage{graphicx}
\usepackage{geometry}
\RequirePackage[latin1]{inputenc} \RequirePackage[T1]{fontenc}

\def\figurename{Figure} 
\makeatletter
\renewcommand{\fnum@figure}[1]{\figurename~\thefigure.}
\makeatother

\def\tablename{Table} 
\makeatletter
\renewcommand{\fnum@table}[1]{\tablename~\thetable.}
\makeatother
\usepackage{color}
\ProvidesPackage{makeidx}
                [2000/03/29 v1.0m Standard LaTeX package]

\usepackage{amsmath}
\usepackage{amssymb}
\usepackage{amsfonts}
\usepackage{amsthm,amscd}
\newtheorem{theorem}{Theorem}[section]
\newtheorem{lemma}[theorem]{Lemma}

\newtheorem{proposition}[theorem]{Proposition}
\theoremstyle{definition}

\newtheorem{definition}[theorem]{Definition}

\theoremstyle{remark}
\newtheorem{remark}[theorem]{Remark}

\numberwithin{equation}{section}

\def\P{\mathbb P}
\def\R{\mathbb R}
\def\E{\mathbb E}

\def\Q{\mathbb Q}
\def\X{\mathbb X}

\def\E{\mathbb E}

\def\cal{\mathcal}

\setlength{\topmargin}{-0.35in}
\setlength{\textheight}{9.8in}   
\setlength{\textwidth}{6.1in}    
\setlength{\oddsidemargin}{0.1in}
\setlength{\evensidemargin}{0.1in}
\setlength{\headheight}{27pt}
\setlength{\headsep}{8pt}


\begin{document}

\title{Generalized Backward doubly
SDEs driven by Lévy processes with discontinuous and linear growth coefficients}

\author{Jean Marc Owo\thanks{owo jm@yahoo.fr, corresponding author} \, and Auguste Aman\thanks{augusteaman5@yahoo.fr/aman.auguste@ufhb.edu.ci} \vspace{0.2cm}\\
UFR de Math\'{e}matiques et Informatique\\ $22$ BP $582$ Abidjan, C\^{o}te d'Ivoire
\\ Universit\'{e} de Cocody}

\date{}
\maketitle \thispagestyle{empty} \setcounter{page}{1}

\thispagestyle{fancy} \fancyhead{}
 \fancyfoot{}
\renewcommand{\headrulewidth}{0pt}

\begin{abstract}
This paper deals with generalized backward doubly
stochastic differential equations driven by a Lévy process (GBDSDEL, in short). Under left or right continuous and linear growth conditions, we prove the existence of minimal (resp. maximal) solutions.
\end{abstract}%

\vspace{.08in} \noindent \textbf{MSC}:Primary: 60F05, 60H15; Secondary: 60J30\\
\vspace{.08in} \noindent \textbf{Keywords}: Backward doubly stochastic
differential equations; Lévy processes; Teugels martingales;
comparison theorem; continuous and linear growth conditions.

\section{Introduction}
\text{}

Backward stochastic differential equations (BSDEs in short) are first appeared with the works of J.M. Bismut \cite{Bismut} in linear form 
as the adjoint processes in the maximum principal of stochastic control.
The non-linear BSDEs have been introduced by Pardoux and Peng \cite{PP1}  in order to give a probabilistic interpretation for the solutions of both parabolic and elliptic semi-linear partial differential equations (PDEs) generalizing the well-known Feynman-Kac formula (see Pardoux and Peng \cite{Pg} and Peng \cite{PP2}). Since then, the theory of BSDEs has been developed because of its many applications in the theory of mathematical finance (El Karoui et al. \cite{ELK1}), in stochastic control and stochastic games (El Karoui and Hamadène \cite{KH}, Hamadène and Lepeltier \cite{HL1}, \cite{HL2}).

\smallskip
Furthermore, Pardoux and Zhang \cite{PZ} gave a probabilistic formula for the viscosity solution of a system of PDEs with a nonlinear Neumann boundary condition by introducing a generalized BSDEs (GBSDEs in short) which involved an integral with respect to an adapted continuous increasing process.
\begin{eqnarray}\label{gb}
Y_{t}&=&\xi+\int_{t}^{T}f(s,Y_{s},Z_{s})ds+\int_{t
}^{T}k(s,Y_{s})dA_s-\int_{t}^{T}Z_{s}dW_{s},\,\ 0\leq t\leq
T.
\end{eqnarray}

On the other hand, in order to give a probabilistic representation for a class of quasilinear stochastic partial differential equations (SPDEs in short), Pardoux and Peng \cite{PardPeng} introduced a new class of BSDEs driven by two Brownian motions, the so called backward doubly stochastic differential equations (BDSDEs in short).
Next, using this kind of BDSDEs, 
Bally and Matoussi \cite{BMat} 
gave the probabilistic representation of the weak solutions of parabolic semi linear SPDEs in Sobolev spaces.

\smallskip
Inspired by \cite{PardPeng} and \cite{PZ}, Boufoussi
et al. \cite{Boufsi} recommended a class of generalized BDSDEs (GBDSDEs in short) and gave the probabilistic representation for stochastic viscosity solutions of semilinear SPDEs with a Neumann boundary condition.
\begin{eqnarray}\label{gbd}
Y_{t}&=&\xi+\int_{t}^{T}f(s,Y_{s},Z_{s})ds+\int_{t
}^{T}k(s,Y_{s})dA_s+\int_{t}^{T}g(s,Y_{s},Z_{s})\,\overleftarrow{dB}_{s}\nonumber\\
&&-\int_{t}^{T}Z_{s}dW_{s},\,\ 0\leq t\leq
T.
\end{eqnarray}

In \cite{NS}, Nualart and Schoutens gave a martingale representation theorem associated to Lévy process.
This result allows them to establish in \cite{NSc} the existence and uniqueness result for BSDEs driven by Teugels martingales associated with Lévy process with moments of all orders.

\smallskip Motivated by the above works, especially, by \cite{Boufsi}, \cite{NS} and \cite{NSc}, Hu and Ren \cite{Ren} showed existence and uniqueness result to GBDSDEs driven by Lévy process (GBDSDEL, in short) under Lipschitz condition on the generators and gave the probabilistic interpretation for solutions of a class of stochastic partial differential integral equations (SPDIEs, in short) with a nonlinear Neumann boundary condition.
\begin{eqnarray}\label{inf}
Y_{t}&=&\xi+\int_{t}^{T}f(s,Y_{s^-},Z_{s})ds+\int_{t
}^{T}k(s,Y_{s^-})dA_s+\int_{t}^{T}g(s,Y_{s^-},Z_{s})\,\overleftarrow{d B}_{s}\nonumber\\
&&-\sum_{i=1}^{+\infty}\int_{t}^{T}Z^{(i)}_{s}dH^{(i)}_{s},\,\ 0\leq t\leq
T.
\end{eqnarray}

Recently, Aman and Owo \cite{amo12} relaxed the Lipschitz condition on the generators in Hu and Ren \cite{Ren}.
More precisely, they established under continuous and linear growth conditions on the generators, the existence result to GBDSDEs driven by Lévy process $L$ which have only $m$ different jump size with no continuous part.
\begin{eqnarray}\label{a111}
Y_{t}&=&\xi+\int_{t}^{T}f(s,Y_{s^-},Z_{s})ds+\int_{t
}^{T}k(s,Y_{s^-})dA_s+\int_{t}^{T}g(s,Y_{s^-})\,\overleftarrow{dB}_{s}\nonumber\\
&&-\sum_{i=1}^{m}\int_{t}^{T}Z^{(i)}_{s}dH^{(i)}_{s},\,\ 0\leq t\leq
T.
\end{eqnarray}
The proof is strongly linked to the comparison theorem which does not hold in general for BDSDEs with jumps (see the counter-example in Buckdahn et al. \cite{Barles}). To overcome this difficulty, they assumed an additional relation between the generator $f$ and the jumps size of the Lévy process $L$.

\smallskip Note that, in the previous works on GBDSDEs driven by Lévy process, the generators are at least continuous (see Hu and Ren \cite{Ren} for Lipschitz continuous, and Aman and Owo \cite{amo12} for continuous and linear growth). But, unfortunately, the continuous conditions can not be satisfied in certain models that makes the results in \cite{Ren} and \cite{amo12} not applied for several applications (finance, stochastic control, stochastic games, SPDEs, etc,...).
For example, let the function $f$ be defined by $f(t,y,z)=\displaystyle y1_{\{y>1\}}+\psi(t,z)$, where $\psi$ is a Lipschitz continuous function in $z$.
Such a function $f$ is not continuous in $y$. Then we can not apply the existence results in \cite{Ren} and \cite{amo12} to get the existence of solution of the above GBDSDEL \eqref{a111} with such a coefficient $f$. To correct this shortcoming, we relax in this paper the conditions on the function $f$ in \cite{Ren} and \cite{amo12} by using a left or a right continuous and linear growth conditions and derive the existence of minimal (resp. maximal) solutions to GBDSDEL \eqref{a111}. 

\medskip
\noindent
The paper is organized as follows. In section 2, we give some notations and preliminaries. Section 3 is devoted to the existence of minimal (resp. maximal) solutions result.

\section{Preliminaries}
\subsection{Notations and Definition}
Let $(\Omega, \mathcal{F},\mathbb{P})$ be a complete probability
space on which are defined all the processes stated in this paper and $T$ be
a fixed final time. Throughout this paper, $\{B_{t}; 0\leq t\leq T \}$ will denote a standard one-dimensional
Brownian motion and $\{L_{t}; 0\leq t\leq T \}$ will denote a Lévy process independent of $\{B_{t}; 0\leq t\leq T \}$ corresponding to a standard Lévy measure $\nu$ such that $\int_{\R}(1\wedge x)\nu(dx)<\infty$.
Let $\mathcal{N}$ denote the class of $\P$-null sets of $\mathcal{F}$. For each $t \in [0,T]$, we define
\begin{equation*}
\mathcal{F}_{t}\overset{\Delta}{=}\mathcal{F}_{t}^{L} \vee \mathcal{F}%
_{t,T}^{B},
\end{equation*}
where for any process $\{\eta_{t}\}$ ; $\mathcal{F}_{s,t}^{\eta}=\sigma
\{\eta_{r}-\eta_{s}; s\leq r \leq t \} \vee \mathcal{N}$, $\mathcal{F}%
_{t}^{\eta}=\mathcal{F}_{0,t}^{\eta}$. \newline
Note that $\{\mathcal{F}_{t}^L,\, t\in [0,T]\}$ is an increasing filtration
and $\{\mathcal{F}_{t,T}^B,\, t\in [0,T]\}$ is a decreasing filtration. Thus
the collection $\{\mathcal{F}_{t},\, t\in [0,T]\}$ is neither increasing nor
decreasing so it does not constitute a filtration.

\medskip
\noindent
Furthermore, we denote by $L_{t^{-}}= \underset{s\nearrow t}{\lim}L_{s}$ the left limit process and by $ \Delta L_{t}=  L_{t^{}}-L_{t^{-}}$ the jump size at time $t.$  The power jumps of the Lévy process $L$ are defined by
$$L_{t}^{(1)}=L_{t}\; \text{\; and \;} \; \displaystyle L_{t}^{(i)}=\sum_{0<s\leq t}( \Delta L_{s})^i, \; \; i\geq2.$$
The Teugels Martingale associated with the Lévy process $L$ are defined by $$T_{t}^{(i)}=L_{t}^{(i)}-\E(L_{t}^{(i)})=L_{t}^{(i)}-t\E(L_{1}^{(i)}), \; \; i\geq1$$
Let $(H^{(i)})_{i\geq1}$ be the family of processes defined by
\begin{eqnarray*}
H_{t}^{(i)}=c_{i,i}T_{t}^{(i)}+c_{i,i-1}T_{t}^{(i-1)}+...+c_{i,1}T_{t}^{(1)}, \; \; i\geq1.
\end{eqnarray*}
In \cite{NSc}, Nualart and Schoutens proved  that
the coefficients $c_{i,k}$ correspond to the orthonormalization
of the polynomials $1,\ x, \ x^2,\cdot\cdot\cdot$ with respect to the measure
$\mu(dx)=x^2\nu(dx)+\sigma^2\delta_0(dx)$, i.e $q_{i}(x)=c_{i,i}x^{i-1}+c_{i,i-1}x^{i-2}+...+c_{i,1}$.
The martingale $(H^{(i)})_{i\geq1}$ can be chosen to be
pairwise strongly orthonormal martingale. That is $[H^{(i)},H^{(j)}],\; i\not=j$, and $\left\{[H^{(i)},H^{(i)}]_{t}-t,t\geq0\right\}$ are uniformly integrable martingale with initial value $0$, i.e.,
for all $i,j$,\ $\displaystyle\langle H^{(i)},H^{(j)}\rangle_{t}=\delta_{ij}t$.

\medskip
\noindent
\begin{remark}
 In the case of a Poisson process $N_t$ with intensity $\lambda >0$, all the Teugels martingales are equal to $T_{t}^{(i)}=N_{t}^{}-\lambda t$. Therefore, $H_{t}^{(1)}=\frac{N_{t}^{}-\lambda t}{\sqrt{\lambda}}$ and $H_{t}^{(i)}=0,$\; for all\; $i\geq 2 $.
\end{remark}

\medskip
\noindent
In this paper, we will consider a Lévy process that has only $m$ different jump
sizes with no continuous part, i.e., 
$$H^{(i)}=0,\; \forall\; i\geq m+1 \text{\, and \,}
 \langle H^{(i)},H^{(j)}\rangle_{t}=\delta_{ij}t, \, \, \, \forall t\in [0,T], \, i,j\in\left\{1,...,m\right\}.$$
\medskip
\noindent
In the sequel, $\{A_{t}; 0\leq t\leq T \}$
is a $\mathcal{F}_{t}$-measurable, continuous and increasing real valued process such that $A_0=0$.

\medskip
\noindent
Let us introduce some spaces:\newline
\medskip
\noindent $\bullet$ $\mathcal{M}^{2}(0,T,\mathbb{R}^{m})$ the set of jointly measurable processes $\varphi:\Omega\times[0,T]\rightarrow\R^{m}$, such that $\varphi_t$ is $\mathcal{F}_{t}$-measurable, for any $t \in [0,T],$ with $$\|\varphi \|_{\mathcal{M}^{2}(\R^{m})}^{2}=\displaystyle\mathbb{E}%
\left(\int_{0}^{T}\| \varphi_{t}^{}\|^{2} dt\right)=\displaystyle\sum_{i=1}^{m}\mathbb{E}%
\left(\int_{0}^{T}\mid \varphi_{t}^{(i)} \mid^{2} dt\right)< \infty.$$

\medskip
\noindent $\bullet$ $\mathcal{S}^{2}(0,T)$
the set of jointly measurable continuous processes $\varphi:\Omega\times[0,T]\rightarrow\R$, such that $\varphi_t$ is $\mathcal{F}_{t}$-measurable, for any $t \in [0,T],$ with $\|\varphi \|_{\mathcal{S}^{2}}^{2}=\mathbb{E}\left(%
\underset{0\leq t\leq T}{\sup} \mid \varphi_{t}\mid^{2}\right)< \infty$.

\medskip
\noindent $\bullet$ $\mathcal{A}^{2}(0,T)$
the class of $d\P\otimes dA_t$ a.e. equal measurable random processes $\varphi:\Omega\times[0,T]\rightarrow\R$
such that $\varphi_t$ is $\mathcal{F}_{t}$-measurable, for any $t \in [0,T],$ with$\|\varphi\|_{\mathcal{A}^2}^{2}=\displaystyle\mathbb{E}
\left(\int_{0}^{T}\mid \varphi_{t} \mid^{2} dA_t\right)< \infty.$

\medskip
\noindent The space $\mathcal{E}_{m}(0,T)=\big(\mathcal{S}^{2}(0,T)
 \cap\mathcal{A}^{2}(0,T)\big)\times\mathcal{M}^{2}(0,T,\R^{m})$ endowed with norm
\begin{eqnarray*}
\|(Y,Z)\|_{\mathcal{E}_m}^{2}=\mathbb{E}\left(%
\underset{0\leq t\leq T}{\sup} \mid Y_{t}\mid^{2}
+\int_{0}^{T}\mid Y_{s} \mid^{2} dA_s+
\int_{0}^{T}\|Z_{s}\|^{2} ds\right)
\end{eqnarray*}
is a Banach space.

\begin{definition}
A pair of $\mathbb{R} \times
\R^{m}$-valued process $(Y,Z)$ is called solution of GBDSDEL $(\xi, f,g, h, A)$
driven by Lévy processes if $(Y,Z)\in \mathcal{E}_{m}(0,T)$ and verifies $\eqref{inf}$.
\end{definition}
\begin{definition}
A pair of $\mathbb{R} \times
\R^{m}$-valued process $(\underline{Y},\underline{Z})$ (resp.\,$(\overline{Y},\overline{Z})$) is said to be a minimal (resp.\,maximal) solution of GBDSDEL if for any
other solution $(Y,Z)$ of GBDSDEL, we have $\underline{Y}\leq Y$
(resp. $Y\leq \overline{Y}$).
\end{definition}
\subsection{GBDSDEL with Lipschitz coefficients}
First, we recall the existence and uniqueness result in Ren et al. \cite{Ren} and the comparison theorem
in Aman and Owo \cite{amo12}. To this end, we consider the following assumptions :

\begin{description}
\item {\bf (A1)}\; The terminal value $\xi \in \mathrm{L}^{2}(\Omega,
\mathcal{F}_{T}, \mathbb{P}, \mathbb{R})$ such that for all $\lambda>0$,\ \
$\E (e^{\lambda A_T}|\xi|^2)<\infty.$

\item {\bf (A2)}\; Let $f,g:\Omega \times [0,T]\times \mathbb{R} \times
\mathbb{R}^{m}\rightarrow \mathbb{R}$ and $k:\Omega \times [0,T]\times \mathbb{R} \rightarrow \mathbb{R}$ be jointly measurable processes, such that :

\textbf{(i)}\; for a.e. $t\in[0,T]$ and any $(y,z)\in{\R}^{}\times {\R}^{m},$ $f(t,y,z)$, $g(t,y,z)$ $k(t,y)$ are ${\cal F}_{t }-$measurable;

\textbf{(ii)}\; there exists a constant $K>0$ and three $\mathcal{F}_t$-measurable processes
$\{f_t, g_t, k_t:0\leq t\leq T\}$ with values in $[1,\infty[$, such that for all $t\in[0,T]$, $(y,z)\in{\R}^{}\times {\R}^{m},$\;
$$\left\{
  \begin{array}{ll}
   |f(t,y,z)| \leq f_t +K(|y|+\|z\|) & \hbox{} \\
    |g(t,y,z)| \leq g_t +K(|y|+\|z\|) & \hbox{} \\
    |k(t,y)| \leq k_t +K|y| & \hbox{}
  \end{array}
\right.$$
and for all $\lambda>0$,\; \; $\displaystyle{\E \left(\int_{0}^{T}e^{\lambda A_t}f_t^2dt+\int_{0}^{T}e^{\lambda A_t}g_t^2dt+\int_{0}^{T}e^{\lambda A_t}k_t^2dA_t\right)<\infty};$

\textbf{(iii)}\; there exists two constants $K>0$, $0 < \alpha < 1$ such that for all $t\in[0,T]$, $(y,z), (y^{\prime},z^{\prime})\in{\R}^{}\times {\R}^{m},$
$$\left\{
  \begin{array}{ll}
   |f(t,y,z)-f(t,y^{\prime},z^{\prime})|\leq K(|y-y^{\prime}|+\|z-z^{\prime}\|) & \hbox{} \\
   |g(t,y,z)-g(t,y^{\prime},z^{\prime})|^2\leq K|y-y^{\prime}|^2+\alpha\|z-z^{\prime}\|^2 & \hbox{} \\
   |k(t,y,)-k(t,y')|  \leq K\mid y-y'\mid & \hbox{}
  \end{array}
\right.$$

\end{description}

\begin{theorem}[Ren et al. \cite{Ren}]\label{te1}
Under assumptions $({\bf A1})$ and $({\bf A2})$, the GBDSDEL $\eqref{inf}$ has a unique solution.
\end{theorem}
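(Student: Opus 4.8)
The plan is to realise the solution as the unique fixed point of a contraction on a renormed copy of the Banach space $\mathcal{E}_m(0,T)$, combining the Pardoux--Peng \cite{PardPeng} scheme for BDSDEs with the Pardoux--Zhang \cite{PZ} device for the $dA_s$-term. Concretely, equip $\mathcal{E}_m(0,T)$ with the equivalent norm
\[
\|(Y,Z)\|_{\mu,\lambda}^2=\E\!\left(\sup_{0\le t\le T}e^{\mu t+\lambda A_t}|Y_t|^2+\int_0^T e^{\mu s+\lambda A_s}|Y_s|^2\,dA_s+\int_0^T e^{\mu s+\lambda A_s}\|Z_s\|^2\,ds\right),
\]
for constants $\mu,\lambda>0$ to be fixed at the very end.

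\textbf{Step 1: the frozen equation.} Given $(U,V)\in\mathcal{E}_m(0,T)$, I would first solve the GBDSDEL whose driver does not involve the unknown, i.e.\ with coefficients $s\mapsto f(s,U_{s^-},V_s)$, $k(s,U_{s^-})$, $g(s,U_{s^-},V_s)$. The linear-growth bounds (A2)(ii), the integrability assumptions in (A1)--(A2)(ii) and standard estimates for the $dA_s$-integral show that
\[
\zeta=\xi+\int_0^T f(s,U_{s^-},V_s)\,ds+\int_0^T k(s,U_{s^-})\,dA_s+\int_0^T g(s,U_{s^-},V_s)\,\overleftarrow{dB}_s\in L^2(\Omega,\mathcal{F}_T,\P).
\]
Since $\mathcal{F}_t=\mathcal{F}_t^L\vee\mathcal{F}_{t,T}^B$ and $L$ and $B$ are independent, conditioning on $\mathcal{F}_T^B$ reduces the representation of $\E[\zeta\mid\mathcal{F}_t]$ to the martingale representation theorem for the Teugels martingales $(H^{(i)})_{1\le i\le m}$ of Nualart--Schoutens \cite{NSc}; this produces a unique $Z=(Z^{(i)})_{1\le i\le m}\in\mathcal{M}^2(0,T,\R^m)$ and a continuous $\mathcal{F}_t$-adapted $Y$ solving the frozen equation, with $Y\in\mathcal{S}^2(0,T)\cap\mathcal{A}^2(0,T)$ by the Burkholder--Davis--Gundy and Doob inequalities. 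This defines a map $\Phi(U,V)=(Y,Z)$ on $\mathcal{E}_m(0,T)$.

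\textbf{Step 2: the contraction.} For inputs $(U,V),(U',V')$ with images $(Y,Z),(Y',Z')$, I would apply It\^o's formula to $e^{\mu s+\lambda A_s}|Y_s-Y'_s|^2$ on $[t,T]$ and take expectations. In the resulting energy identity $\E\sum_i\int_t^T e^{\mu s+\lambda A_s}|Z^{(i)}_s-Z'^{(i)}_s|^2\,ds$ sits on the left, while the backward integral against $B$ contributes $\E\int_t^T e^{\mu s+\lambda A_s}|g(s,U_{s^-},V_s)-g(s,U'_{s^-},V'_s)|^2\,ds$ on the right. By (A2)(iii), in particular $|g(s,y,z)-g(s,y',z')|^2\le K|y-y'|^2+\alpha\|z-z'\|^2$ with $\alpha<1$, this last term is bounded by $K\,\E\int_t^T e^{\mu s+\lambda A_s}|U_s-U'_s|^2\,ds+\alpha\,\E\int_t^T e^{\mu s+\lambda A_s}\|V_s-V'_s\|^2\,ds$, whose $\|V-V'\|^2$-part leaves a strictly positive margin against the left-hand $\|Z-Z'\|^2$-term precisely because $\alpha<1$. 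Treating the cross terms coming from $f$ and $k$ with $2ab\le\varepsilon a^2+\varepsilon^{-1}b^2$, then choosing $\varepsilon$ small, $\lambda$ large (to absorb the $|Y-Y'|^2\,dA_s$ terms) and $\mu$ large (to absorb the $|Y-Y'|^2\,ds$ terms), and finally passing to the supremum with a further Burkholder--Davis--Gundy step, one reaches
\[
\|\Phi(U,V)-\Phi(U',V')\|_{\mu,\lambda}^2\le\frac12\,\|(U,V)-(U',V')\|_{\mu,\lambda}^2 .
\]
Hence $\Phi$ is a contraction on the Banach space $\mathcal{E}_m(0,T)$, and its unique fixed point is the unique solution of \eqref{inf}.

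The main obstacle is the sign bookkeeping in Step 2: because $g$ multiplies the \emph{backward} integral $\overleftarrow{dB}_s$, the quantity $|g-g'|^2$ enters the energy identity on the \emph{same side} as the source terms rather than alongside $\|Z-Z'\|^2$, so only the structural hypothesis $\alpha<1$ lets the $\alpha\|z-z'\|^2$-part be swallowed by the left-hand $\|Z-Z'\|^2$; this is also what forces the order in which $\varepsilon,\lambda,\mu$ must be chosen. A secondary delicate point is making the representation in Step 1 rigorous even though $\{\mathcal{F}_t\}$ is not a filtration, which is handled exactly as in Pardoux--Peng \cite{PardPeng} by freezing the $B$-randomness. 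The remaining ingredients — the $L^2$-estimates from linear growth showing that $\Phi$ maps $\mathcal{E}_m(0,T)$ into itself, and the sup-norm bounds — are routine.
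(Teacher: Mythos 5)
This theorem is quoted from Hu and Ren \cite{Ren}; the paper gives no proof of its own, and your fixed-point scheme --- martingale representation for the frozen equation, then a contraction estimate under the weight $e^{\mu t+\lambda A_t}$ in which the hypothesis $\alpha<1$ lets the $\|V-V'\|^2$ contribution of the backward integral be absorbed by the left-hand $\|Z-Z'\|^2$ term --- is exactly the standard argument used in that reference. The only cosmetic slip is that \eqref{inf} carries the full family $(H^{(i)})_{i\ge1}$, so in your Step 1 the representation should run over all $i\ge1$ with $Z$ valued in $\ell^2$ rather than over $1\le i\le m$ (the restriction to $m$ jump sizes only enters later, for \eqref{a111}).
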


\noindent Then, considering GBDSDEL $\eqref{a111}$, we have the following comparison theorem established by Aman and Owo in \cite{amo12}.

\begin{theorem}[Aman and Owo \cite{amo12}]\label{tc1}
Assume  $({\bf A1})$ and $({\bf A2})$, and let $(Y^1,Z^1)$ and $(Y^2,Z^2)$ be the solutions of GBDSDEL $\eqref{a111}$ associated with $(\xi^{1},f^{1},g,k^{1},A)$ and $(\xi^{2},f^{2},g,k^{2},A)$ respectively. We suppose:
\begin{itemize}
\item $\xi^2\geq \xi^1, \ \P$-a.s.,
\item $f^2(t,y,z)\geq f^1(t,y,z)$, and $k^2(t,y)\geq k^1(t,y)$ \ $\P$-a.s.,
  for all $(t,y,z)\in [0,T]\times \R\times\R^m$,
\item $\displaystyle\beta^i_t=\frac{f^1(t,y^{2},\widetilde{z}^{(i)})-
 f^1(t,y^{2},\widetilde{z}^{(i-1)})}{z^{2(i)}- z^{1(i)}}
  \mathbf{1}_{\left\{z^{2(i)}\neq z^{1(i)}\right\}}$,
\end{itemize}
where
$\widetilde{z}^{(i)}=\Big(z^{2(1)},z^{2(2)},...,z^{2(i)},z^{1(i+1)},...,z^{1(m)}\Big)$
such that
\begin{eqnarray}
\sum_{i=1}^{m}\beta_t^i\Delta H_t^{(i)}>-1, \ dt\otimes d\P\mbox{-a.s}.\label{adprop}
\end{eqnarray}
Then, we have for all $t\in[0,T],\:\: Y_t^2\geq Y_t^1$, a.s.

\medskip \noindent Moreover, for all $(t,y,z)\in[0,T]\times\mathbb{R} \times \mathbb{R}^m$, if \ $ \xi^{2}> \xi^{1}$, or $f^{2}(t,y,z)> f^{1}(t,y,z)$, \\ or $k^{2}(t,y)> k^{1}(t,y)$, a.s.,
$Y_{t}^{2}> Y_{t}^{1},\ \ a.s.,\ \forall\, t \in [0,T]$.
\end{theorem}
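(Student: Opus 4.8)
The plan is to carry out the classical linearization-and-change-of-measure argument for comparison of backward (doubly) SDEs, the only genuinely new ingredient being that the adjoint (Dol\'eans--Dade) exponential must remain strictly positive across the jumps of the Teugels martingales --- which is exactly what hypothesis \eqref{adprop} guarantees. Write $(\bar Y,\bar Z):=(Y^2-Y^1,Z^2-Z^1)$, which lies in $\mathcal E_m(0,T)$, and $\bar\xi:=\xi^2-\xi^1\ge0$. Subtracting the two instances of \eqref{a111} shows that $(\bar Y,\bar Z)$ solves a GBDSDEL with terminal value $\bar\xi$ and driver increments $f^2(s,Y^2_{s^-},Z^2_s)-f^1(s,Y^1_{s^-},Z^1_s)$, $k^2(s,Y^2_{s^-})-k^1(s,Y^1_{s^-})$ and $g(s,Y^2_{s^-})-g(s,Y^1_{s^-})$.

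The next step is the linearization. Using $f^2\ge f^1$, write $f^2(s,Y^2_{s^-},Z^2_s)-f^1(s,Y^1_{s^-},Z^1_s)$ as the sum of $\delta f_s:=f^2(s,Y^2_{s^-},Z^2_s)-f^1(s,Y^2_{s^-},Z^2_s)\ge0$, of $f^1(s,Y^2_{s^-},Z^2_s)-f^1(s,Y^2_{s^-},Z^1_s)$, and of $f^1(s,Y^2_{s^-},Z^1_s)-f^1(s,Y^1_{s^-},Z^1_s)$. By \textbf{(A2)}(iii) the last term equals $a_s\bar Y_{s^-}$ with $|a_s|\le K$ (the difference quotient of $f^1$ in the $y$-variable), and telescoping the middle one along $\widetilde z^{(0)}_s=Z^1_s,\,\widetilde z^{(1)}_s,\dots,\widetilde z^{(m)}_s=Z^2_s$ produces exactly $\sum_{i=1}^m\beta^i_s\bar Z^{(i)}_s$ with the $\beta^i_s$ of the statement and $|\beta^i_s|\le K$. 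Likewise $k^2(s,Y^2_{s^-})-k^1(s,Y^1_{s^-})=\delta k_s+c_s\bar Y_{s^-}$ with $\delta k_s\ge0$ and $|c_s|\le K$, and $g(s,Y^2_{s^-})-g(s,Y^1_{s^-})=d_s\bar Y_{s^-}$ with $d_s$ bounded. Hence $\bar Y$ is the first coordinate of a \emph{linear} GBDSDEL with bounded coefficients $(a,c,d,\beta^1,\dots,\beta^m)$, nonnegative terminal value $\bar\xi$, and nonnegative ``source'' $\delta f_s\,ds+\delta k_s\,dA_s$.

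Now fix $t\in[0,T]$ and let $\Gamma_{t,\cdot}$ be the solution on $[t,T]$, with $\Gamma_{t,t}=1$, of the linear equation $d\Gamma_{t,s}=\Gamma_{t,s^-}\big(\widehat a_s\,ds+c_s\,dA_s+d_s\,\overleftarrow{dB}_s+\sum_{i=1}^m\beta^i_s\,dH^{(i)}_s\big)$, where $\widehat a_s$ incorporates $a_s$ together with the drift corrections produced by the quadratic covariations of $B$ and of the $H^{(i)}$ in the integration by parts below; equivalently, $\Gamma_{t,\cdot}$ is an explicit Dol\'eans--Dade exponential. Its jumps satisfy $\Gamma_{t,s}=\Gamma_{t,s^-}\big(1+\sum_{i=1}^m\beta^i_s\Delta H^{(i)}_s\big)$, so that \eqref{adprop} forces every factor, hence $\Gamma_{t,s}$ itself, to be strictly positive on $[t,T]$; boundedness of the coefficients together with \textbf{(A1)}--\textbf{(A2)} supplies the integrability needed for the martingale terms below, after a routine localization. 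Applying the It\^o product rule to $\Gamma_{t,s}\bar Y_s$ on $[t,T]$, the choice of $\Gamma_{t,\cdot}$ makes every contribution linear in $\bar Y$ and in $\bar Z$ cancel once one takes $\mathbb E[\,\cdot\,|\mathcal F_t]$ (which also kills the stochastic integrals, the backward $\overleftarrow{dB}$-term being handled as in the classical BDSDE comparison theorem); with $\bar Y_T=\bar\xi$ this gives
\[
\bar Y_t=\mathbb E\!\left[\Gamma_{t,T}\,\bar\xi+\int_t^T\Gamma_{t,s}\,\delta f_s\,ds+\int_t^T\Gamma_{t,s}\,\delta k_s\,dA_s\ \Big|\ \mathcal F_t\right]\ge0,
\]
that is, $Y^2_t\ge Y^1_t$ a.s. If moreover $\xi^2>\xi^1$, or $f^2>f^1$, or $k^2>k^1$, the corresponding term above becomes strictly positive (a.s., resp.\ $ds\otimes d\mathbb P$- or $dA_s\otimes d\mathbb P$-a.e.), which forces $\bar Y_t>0$.

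The only genuinely delicate point is the jump part. First, $\Gamma_{t,\cdot}$ has to stay strictly positive, which fails for general BDSDEs with jumps --- cf.\ the counterexample of Buckdahn et al.\ \cite{Barles} --- and is precisely why the extra hypothesis \eqref{adprop} linking $f^1$ to the jump sizes of $L$ is imposed. Second, the additional covariation terms generated by the $H^{(i)}$ in the product rule must be pushed through so that, in conditional expectation, they exactly cancel the $\sum_{i=1}^m\beta^i_s\bar Z^{(i)}_s\,ds$ coming from the linearized driver. By contrast, the mixed forward/backward It\^o calculus poses no real difficulty, all the more since the coefficient $g$ in \eqref{a111} does not depend on $z$.
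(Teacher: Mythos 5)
The paper itself does not prove Theorem \ref{tc1}; it is recalled from \cite{amo12} without proof, so the comparison below is with the argument of \cite{amo12} and the standard BDSDE literature. Your linearization is correct, and you have correctly identified the role of \eqref{adprop}: it is exactly the condition $1+\sum_{i=1}^m\beta^i_s\Delta H^{(i)}_s>0$ that keeps the Dol\'eans--Dade exponential of $\sum_i\int\beta^i_s\,dH^{(i)}_s$ strictly positive, and the counterexample in Section~2.2 violates precisely this.

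There is, however, a genuine gap in the construction of the adjoint process $\Gamma_{t,\cdot}$. You define $\Gamma_{t,s}$ forward in $s$ from $\Gamma_{t,t}=1$ by an equation containing the term $\Gamma_{t,s^-}d_s\,\overleftarrow{dB}_s$. A backward It\^o integral requires its integrand at time $s$ to be measurable with respect to $\mathcal{F}^B_{s,T}$ (jointly with $\mathcal{F}^L_s$); but $\Gamma_{t,s}$, built up from time $t$ forward, depends on the increments of $B$ over $[t,s]$, which are independent of $\mathcal{F}^B_{s,T}$. The defining equation for $\Gamma$ is therefore anticipative, and the same measurability clash contaminates the product rule, the bracket $[\Gamma,\bar Y]$, and the claim that the remaining stochastic integrals vanish under $\E[\,\cdot\,|\mathcal{F}_t]$. (Dropping the $\overleftarrow{dB}$ driver from $\Gamma$ does not fully repair this, since the coefficients $a_s,c_s,\beta^i_s$ are themselves $\mathcal{F}_s=\mathcal{F}^L_s\vee\mathcal{F}^B_{s,T}$-measurable.) This is exactly why comparison theorems for backward \emph{doubly} SDEs (Shi--Gu--Lui \cite{Y}, and the proof in \cite{amo12}) do not rely on a full adjoint representation: the exponential is used only for the jump part, as a Girsanov density $d\Q=\mathcal{E}\big(\sum_i\int\beta^i\,dH^{(i)}\big)_T\,d\P$ --- legitimate thanks to \eqref{adprop} and the boundedness of $\beta^i$ --- so that the terms $\sum_i\beta^i_s\bar Z^{(i)}_s\,ds$ are absorbed into the new martingales $H^{(i)}_\cdot-\int_0^\cdot\beta^i_s\,ds$; one then applies the Meyer--It\^o formula to $\big((\bar Y_t)^-\big)^2$, where convexity gives the jump-correction term the favourable sign, and concludes by Gronwall's lemma in $ds+dA_s$. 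In that scheme the strict-comparison assertion requires a short separate argument rather than dropping out of a representation formula, so your last step also needs to be reworked.
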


\begin{remark}
- If the Lévy process $L$ has only positive jumps size, then the inequality \eqref{adprop} holds when, for each $(t,y)\in [0,T]\times \R$, the function $z\mapsto f(t,y,z)$ is non-decreasing.

\noindent
- If the Lévy process $L$ has only negative jumps size, then the inequality \eqref{adprop} holds when, for each $(t,y)\in [0,T]\times \R$, the function $z\mapsto f(t,y,z)$ is non-increasing.
\end{remark}
\begin{remark}
If the inequality \eqref{adprop} does not hold, then, in general, the comparison theorem does not hold. We give now a
counter-example.
\end{remark}
\medskip \noindent Let $N_t$ be a Poisson process with intensity $\lambda >0$, then $H_{t}^{(1)}=\frac{N_{t}^{}-\lambda t}{\sqrt{\lambda}}$, and $H_{t}^{(i)}=0,$ \;$i\geq 2 $. \\ Now, let $f(t,y,z)=-(1+\sqrt{\lambda})z$,\; and consider the following equations $(j=1,2)$ :
\begin{eqnarray*}
Y_{t}^{j}&=&\xi^{j}+\int_{t}^{T}-(1+\sqrt{\lambda})Z_{s}^{j}ds-\int_{t}^{T}Z^{j}_{s}dH^{(1)}_{s},\,\ 0\leq t\leq
T.\label{}
\end{eqnarray*}
If we choose $$ \xi^{2}=\frac{N_{T}}{\sqrt{\lambda}} \; \; \text{and} \; \; \xi^{1}=0,$$
then
$$ (Y_t^{2},Z_t^{2})=(\frac{N_{t}}{\sqrt{\lambda}}+t-T,1)\; \; \text{and} \; \; (Y_t^{1},Z_t^{1})=(0,0).$$
Clearly, $ \xi^{2}\geq\xi^{1},$ but $\P(Y_t^{2}<Y_t^{1})= \P(N_{t}<\sqrt{\lambda}(T-t))>0,$ for all $0\leq t<T$.

\subsection{GBDSDEL with continuous coefficients}
Furthermore, using the above comparison theorem and the well know approximation method of the functions $f$ and $k$, Aman and Owo \cite{amo12} proved the existence of a minimal or maximal solution for GBDSDEL $\eqref{a111}$ under the continuous and linear growth conditions.  In this section, we recall this result. In this fact, we consider the following assumptions :
\begin{description}
\item {\bf (H1)}\; The terminal value $\xi \in \mathrm{L}^{2}(\Omega,
\mathcal{F}_{T}, \mathbb{P}, \mathbb{R})$ such that for all $\lambda>0$,\ \
$\E (e^{\lambda A_T}|\xi|^2)<\infty.$

\item {\bf (H2)}\; Let $f:\Omega \times [0,T]\times \mathbb{R} \times
\mathbb{R}^{m}\rightarrow \mathbb{R}$ and $g,k:\Omega \times [0,T]\times \mathbb{R} \rightarrow \mathbb{R}$ be jointly measurable processes, such that :

\textbf{(i)}\; for a.e. $t\in[0,T]$ and any $(y,z)\in{\R}^{}\times {\R}^{m},$ $f(t,y,z)$, $g(t,y)$ $k(t,y)$ are ${\cal F}_{t }-$measurable;

\textbf{(ii)}\; there exists a constant $K>0$ and three $\mathcal{F}_t$-measurable processes
$\{f_t,g_t, k_t:0\leq t\leq T\}$ with values in $[1,\infty[$, such that for all $t\in[0,T]$, $(y,z)\in{\R}^{}\times {\R}^{m},$\;
$$\left\{
  \begin{array}{ll}
   |f(t,y,z)| \leq f_t +K(|y|+\|z\|) & \hbox{} \\
 |g(t,y)| \leq g_t +K|y|,\;  \hbox{with\; } g(t,0)=0&  \\
    |k(t,y)| \leq k_t +K|y| & \hbox{}
  \end{array}
\right.$$
\textbf{(iii)}\; for all $\mu, \lambda>0$,\; \; $\displaystyle{\E \left(\int_{0}^{T}e^{\mu t+\lambda A_t}f_t^2dt+\int_{0}^{T}e^{\mu t+\lambda A_t}g_t^2dt+
\int_{0}^{T}e^{\mu t+\lambda A_t}k_t^2dA_t\right)<\infty};$

\textbf{(iv)}\; there exists a constant $K>0$, such that for all $t\in[0,T]$, $(y,z), (y^{\prime},z^{\prime})\in{\R}^{}\times {\R}^{m},$
$$\left\{
  \begin{array}{ll}
   |f(t,y,z)-f(t,y,z^{\prime})|\leq K\|z-z^{\prime}\| & \hbox{} \\
   |g(t,y)-g(t,y^{\prime})|^2\leq K|y-y^{\prime}|^2 & \hbox{}
  \end{array}
\right.$$
\item [(v)] for all $(t,z)\in[0,T]\times {\R}^{m}$, $y\mapsto f(t,y,z)$ and $y\mapsto k(t,y)$ are continuous, for all $\omega$ a.e.
\end{description}

\begin{theorem}[Aman and Owo \cite{amo12}]\label{te}
Under assumptions $({\bf H1})$ and $({\bf H2})$, the GBDSDEL \eqref{a111}
has a minimal solution $(\underline{Y},\underline{Z})\in \mathcal{E}_{m}(0,T)$ (resp. a maximal solution $(\overline{Y},\overline{Z})\in \mathcal{E}_{m}(0,T)$).
\end{theorem}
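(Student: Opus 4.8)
The plan is to carry out a monotone Lipschitz approximation of the two coefficients that are only continuous in $y$, namely $f$ and $k$ (the coefficient $g$ needs no approximation, being already Lipschitz and independent of $z$), to solve the approximating equations by Theorem~\ref{te1}, to make the approximating solutions monotone via the comparison theorem~\ref{tc1}, to obtain uniform a priori bounds, and finally to pass to the limit. For the minimal solution I would set, for $n\ge K$,
\[
f_n(t,y,z)=\inf_{u\in\Q}\{f(t,u,z)+n|y-u|\},\qquad k_n(t,y)=\inf_{u\in\Q}\{k(t,u)+n|y-u|\}.
\]
The linear growth in $({\bf H2})$(ii) makes these finite and $\mathcal F_t$-measurable; they are $n$-Lipschitz in $y$, $f_n$ is still $K$-Lipschitz in $z$, they satisfy the \emph{same} linear growth bounds with the \emph{same} $f_t,k_t,K$, and by the continuity in $({\bf H2})$(v) one has $f_n\uparrow f$, $k_n\uparrow k$ pointwise with $f_n\le f_{n+1}\le f$ and $k_n\le k_{n+1}\le k$. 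Since an infimum of nondecreasing functions is nondecreasing, $f_n$ also inherits from $f$ the structural relation with the jump sizes of $L$ that makes \eqref{adprop}, hence Theorem~\ref{tc1}, applicable; this relation on $f$ is assumed throughout, as in \cite{amo12}. By Theorem~\ref{te1} the GBDSDEL \eqref{a111} associated with $(\xi,f_n,g,k_n,A)$ has a unique solution $(Y^n,Z^n)\in\mathcal E_m(0,T)$, and since $f_n\le f_{n+1}$, $k_n\le k_{n+1}$ while $\xi$ and $g$ are unchanged, Theorem~\ref{tc1} gives $Y^n_t\le Y^{n+1}_t$ for all $t$, a.s.

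Next I would derive uniform estimates. Applying It\^o's formula to $e^{\mu t+\lambda A_t}|Y^n_t|^2$ --- the jump part of the Teugels martingales contributing the term $\mathbb{E}\int_t^T e^{\mu s+\lambda A_s}\|Z^n_s\|^2\,ds$ through $\langle H^{(i)},H^{(j)}\rangle_s=\delta_{ij}s$, and the backward integral contributing $+\mathbb{E}\int_t^T e^{\mu s+\lambda A_s}|g(s,Y^n_{s^-})|^2\,ds$ --- then using the linear growth, Young's inequality to absorb the $\|Z^n\|^2$-term, a choice of $\mu,\lambda$ large enough to absorb the $|Y^n|^2\,ds$- and $|Y^n|^2\,dA_s$-terms, and Burkholder--Davis--Gundy for the martingale parts, I expect the bound
\[
\sup_n\,\mathbb{E}\Big(\sup_{0\le t\le T}e^{\mu t+\lambda A_t}|Y^n_t|^2+\int_0^T e^{\mu s+\lambda A_s}|Y^n_s|^2\,dA_s+\int_0^T e^{\mu s+\lambda A_s}\|Z^n_s\|^2\,ds\Big)\le C,
\]
finite by $({\bf H1})$ and $({\bf H2})$(iii). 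Consequently $Y^n_t\uparrow Y_t$ a.s.\ for every $t$; since $Y^1_t\le Y^n_t\le Y_t$ with $\mathbb{E}\sup_t|Y_t|^2<\infty$ by Fatou, dominated convergence gives $Y^n\to Y$ in $\mathcal S^2$ and in $\mathcal A^2$. For the $Z$-component I would apply It\^o to $|Y^n_t-Y^p_t|^2$, split the $f$-difference $f_n(s,Y^n,Z^n)-f_p(s,Y^p,Z^p)$ into $f_n(s,Y^n,Z^n)-f_n(s,Y^n,Z^p)$ (bounded by $K\|Z^n-Z^p\|$ and absorbed after Young) plus a residual, and estimate the residual and the $k$-difference by Cauchy--Schwarz --- one factor, a norm of $Y^n-Y^p$, tending to $0$ and the other bounded uniformly by the linear growth and the estimate above --- while the $g$-difference is controlled by $K\,\mathbb{E}\int|Y^n_s-Y^p_s|^2\,ds\to0$. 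This gives $(Z^n)$ Cauchy in $\mathcal M^2$, hence $Z^n\to Z$ and $(Y,Z)\in\mathcal E_m(0,T)$.

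To identify the limit, the integrals $\int_t^T g(s,Y^n_{s^-})\,\overleftarrow{dB}_s$ and $\sum_i\int_t^T Z^{n,(i)}_s\,dH^{(i)}_s$ converge in $L^2$ to the corresponding integrals for $(Y,Z)$. For the generators I would use the sandwiching $k_N(s,Y_s)\le\liminf_n k_n(s,Y^n_s)\le\limsup_n k_n(s,Y^n_s)\le k(s,Y_s)$, valid for each $N$ because $k_n\ge k_N$ for $n\ge N$ and $k_N(s,\cdot)$, $k(s,\cdot)$ are continuous, together with $k_N\uparrow k$; this yields $k_n(s,Y^n_s)\to k(s,Y_s)$ a.s., and domination by $k_s+K(|Y^1_s|+|Y_s|)\in L^1(dA_s\otimes d\mathbb{P})$ gives $\int_t^T k_n(s,Y^n_{s^-})\,dA_s\to\int_t^T k(s,Y_{s^-})\,dA_s$ in $L^1$. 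The same sandwiching, combined with $K$-Lipschitz continuity in $z$ and a subsequence along which $Z^n\to Z$ a.e., gives $f_n(s,Y^n_s,Z^n_s)\to f(s,Y_s,Z_s)$ a.e.; this family is bounded in $L^2(ds\otimes d\mathbb{P})$, hence uniformly integrable, so $\int_t^T f_n(s,Y^n_{s^-},Z^n_s)\,ds\to\int_t^T f(s,Y_{s^-},Z_s)\,ds$ in $L^1$. Passing to the limit in \eqref{a111} shows $(Y,Z)$ solves \eqref{a111} for $(\xi,f,g,k,A)$. Finally, if $(Y',Z')$ is any other solution, Theorem~\ref{tc1} applied to $(\xi,f_n,g,k_n,A)$ versus $(\xi,f,g,k,A)$ (using $f_n\le f$, $k_n\le k$ and \eqref{adprop} for $f_n$) gives $Y^n_t\le Y'_t$, hence $Y_t\le Y'_t$, so $(Y,Z)$ is the minimal solution. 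The maximal solution is obtained symmetrically with the supremal convolutions $f^n(t,y,z)=\sup_{u\in\Q}\{f(t,u,z)-n|y-u|\}$, $k^n(t,y)=\sup_{u\in\Q}\{k(t,u)-n|y-u|\}$, which decrease to $f$ and $k$, so that $(Y^n)$ is nonincreasing and Theorem~\ref{tc1} gives $Y'_t\le Y^n_t$ for any other solution.

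The step I expect to be the main obstacle is the passage to the limit in the generators: the $y$-Lipschitz constants of $f_n$ and $k_n$ blow up like $n$, so a direct Lipschitz estimate is useless, and one must instead combine the monotone sandwiching argument (using $f_N\uparrow f$ and the continuity in $y$ of $f$ and of every $f_N$) to produce a.e.\ convergence, with the uniform $L^2$-bounds --- hence uniform integrability --- to upgrade it to $L^1$-convergence of the generators. A second, more structural difficulty is that the whole scheme rests on the comparison theorem, whose hypothesis \eqref{adprop} ties $f$ to the jump sizes of $L$; one must verify that this condition is preserved by the approximations, which is exactly the inheritance of the $z$-monotonicity of $f$ noted in the first step.
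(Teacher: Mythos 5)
Your proposal follows essentially the same route as the proof the paper cites from \cite{amo12}: the inf-convolution Lipschitz approximations $f_n,k_n$ over $u\in\Q$ (the paper's ``Lemma 3.2 in \cite{amo12}''), existence of the approximating solutions via Theorem~\ref{te1}, monotonicity via the comparison Theorem~\ref{tc1} under the inherited condition \eqref{adprop}, uniform a priori estimates, and passage to the limit with the sandwiching argument to identify the generator. Your closing remarks correctly isolate the two genuine delicacies (the blow-up of the $y$-Lipschitz constants and the preservation of \eqref{adprop}), both of which are handled the same way in the cited proof, so no changes are needed.
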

Moreover, using the same argument as in \cite{amo12}, one can establish the following comparison theorem which requires at least one Lipschitz function :
\begin{theorem}[Conparison with at least one Lipschitz function]\label{l0a1}
Let $g$ and $\xi^i$ $(i=1,2)$ satisfy ${\bf (H1)}$ and ${\bf(H2)}$. Assume that GBDSDEL $(\xi^{i}, f^{i},g, k^{i}, A)$  have solutions $(Y^{i},Z^{i})\in {\cal E}_{m} (0,T )$, $i=1,2$, respectively. Assume moreover that
\begin{itemize}
\item $\xi^2\geq \xi^1, \ \P$-a.s.,
\item $f^1$ and $k^{1}$ satisfy ${\bf (A2)}$ such that $f^2(t,Y_{t}^2,Z_{t}^2)\geq f^1(t,Y_{t}^2,Z_{t}^2)$, and $k^2(t,Y_{t}^2)\geq k^1(t,Y_{t}^2)$ \ $\P$-a.s., for all $t\in [0,T]$, with
$$\displaystyle\beta^i_t=\frac{f^1(t,Y_{t}^{2},\widetilde{Z}_{t}^{(i)})-
 f^1(t,Y_{t}^{2},\widetilde{Z}_{t}^{(i-1)})}{Z_{t}^{2(i)}- Z_{t}^{1(i)}}
  \mathbf{1}_{\left\{Z_{t}^{2(i)}\neq Z_{t}^{1(i)}\right\}},$$
where
$\widetilde{Z}_{t}^{(i)}=\Big(Z_{t}^{2(1)},Z_{t}^{2(2)},...,Z_{t}^{2(i)},Z_{t}^{1(i+1)},...,Z_{t}^{1(m)}\Big)$
\item[]\text{} \Big(resp. $f^2$ and $k^{2}$ satisfy ${\bf(A2)}$ such that $f^2(t,Y_{t}^1,Z_{t}^1)\geq f^1(t,Y_{t}^1,Z_{t}^1)$, and $k^2(t,Y_{t}^1)\geq k^1(t,Y_{t}^1)$ \ $\P$-a.s., for all $t\in [0,T]$,  with
$$\displaystyle\beta^i_t=\frac{f^2(t,Y_{t}^{1},\widetilde{Z}_{t}^{(i)})-
 f^2(t,Y_{t}^{1},\widetilde{Z}_{t}^{(i-1)})}{Z_{t}^{1(i)}- Z_{t}^{2(i)}}
  \mathbf{1}_{\left\{Z_{t}^{1(i)}\neq Z_{t}^{2(i)}\right\}},$$
\end{itemize}
where
$\widetilde{Z}_{t}^{(i)}=\Big(Z_{t}^{1(1)},Z_{t}^{1(2)},...,Z_{t}^{1(i)},Z_{t}^{2(i+1)},...,Z_{t}^{2(m)}\Big)$\Big),
such that
\begin{eqnarray}
\sum_{i=1}^{m}\beta_t^i\Delta H_t^{(i)}>-1, \ dt\otimes d\P\mbox{-a.s}.\label{ad1}
\end{eqnarray}

\medskip\noindent  Then, $Y_t^2\geq Y_t^1$\ \ a.s., for all $t\in[0,T]$.
\end{theorem}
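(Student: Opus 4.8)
The plan is to linearise the difference of the two equations and reduce the assertion to the positivity of a \emph{linear} GBDSDEL, following the scheme of the proof of Theorem~\ref{tc1}; the one genuinely new feature is that only \emph{one} of the two generators is Lipschitz, so all difference quotients must be frozen along the trajectory of the \emph{other} solution. I treat the first alternative (the ``resp.''\ one being symmetric). Set $\bar Y=Y^2-Y^1$, $\bar Z=Z^2-Z^1$ and $\bar\xi=\xi^2-\xi^1\ge0$. Writing
\[
 f^2(s,Y^2_{s^-},Z^2_s)-f^1(s,Y^1_{s^-},Z^1_s)=\varphi_s+\Big[f^1(s,Y^2_{s^-},Z^2_s)-f^1(s,Y^2_{s^-},Z^1_s)\Big]+\Big[f^1(s,Y^2_{s^-},Z^1_s)-f^1(s,Y^1_{s^-},Z^1_s)\Big],
\]
the term $\varphi_s:=(f^2-f^1)(s,Y^2_{s^-},Z^2_s)$ is $\ge0$ ($ds$-a.e.) by hypothesis, the middle bracket telescopes over the coordinates of $\widetilde Z^{(i)}_s$ into $\sum_{i=1}^m\beta^i_s\bar Z^{(i)}_s$ with the $\beta^i_s$ of the statement and $|\beta^i_s|\le K$ (the $z$-Lipschitz bound in {\bf(A2)}), and the last bracket equals $a_s\bar Y_{s^-}$ with $|a_s|\le K$ (the $y$-Lipschitz bound in {\bf(A2)}). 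Doing the same for $k$ (using $k^2\ge k^1$ along $Y^2$ and the Lipschitz property of $k^1$) and for $g$ (using $|g(s,y)-g(s,y')|\le\sqrt K\,|y-y'|$ from {\bf(H2)}), one finds that $(\bar Y,\bar Z)\in\mathcal E_m(0,T)$ solves the linear equation
\[
 \bar Y_t=\bar\xi+\int_t^T\Big(a_s\bar Y_{s^-}+\sum_{i=1}^m\beta^i_s\bar Z^{(i)}_s+\varphi_s\Big)ds+\int_t^T\big(c_s\bar Y_{s^-}+\psi_s\big)dA_s+\int_t^T h_s\bar Y_{s^-}\,\overleftarrow{dB}_s-\sum_{i=1}^m\int_t^T\bar Z^{(i)}_s\,dH^{(i)}_s,
\]
with $a,c,h$ bounded and $\varphi_s,\psi_s\ge0$; it remains to show that $\bar Y_t\ge0$ a.s.\ for every $t$.

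For this I would apply It\^o's formula, in the mixed forward--backward form of Pardoux--Peng, to $e^{\mu s+\lambda A_s}v(\bar Y_s)$ with $v(x):=(x^-)^2$, $x^-=\max(-x,0)$ (so $v$ is convex and $C^1$ with $v'(x)=2x\,\mathbf 1_{\{x\le0\}}\le0$) and $\mu,\lambda>0$ fixed at the end; a standard localisation makes this rigorous. Since $\bar Y_T=\bar\xi\ge0$ one has $v(\bar Y_T)=0$; the $\overleftarrow{dB}$- and $dH^{(i)}$-stochastic integrals have zero expectation; the terms $v'(\bar Y_{s^-})\varphi_s$ and $v'(\bar Y_{s^-})\psi_s$ are $\le0$ and may be discarded; and the $y$-linear drift and $dA$ contributions, together with the It\^o correction produced by the backward $B$-integral (controlled by $|h_s|\le\sqrt K$), are bounded by $C\,v(\bar Y_s)$. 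Taking expectations leaves
\[
 \E\big[e^{\mu t+\lambda A_t}v(\bar Y_t)\big]+\E\!\int_t^T e^{\mu s+\lambda A_s}v(\bar Y_s)\,(\mu\,ds+\lambda\,dA_s)\ \le\ C\,\E\!\int_t^T e^{\mu s+\lambda A_s}v(\bar Y_s)\,(ds+dA_s)\ +\ \mathcal J_t ,
\]
where $\mathcal J_t$ gathers the drift term $\E\int_t^T e^{\mu s+\lambda A_s}v'(\bar Y_{s^-})\sum_{i=1}^m\beta^i_s\bar Z^{(i)}_s\,ds$ and, with a minus sign, the jump correction $\E\sum_{t<s\le T}e^{\mu s+\lambda A_s}\big(v(\bar Y_s)-v(\bar Y_{s^-})-v'(\bar Y_{s^-})\Delta\bar Y_s\big)$, with $\Delta\bar Y_s=\sum_{i=1}^m\bar Z^{(i)}_s\Delta H^{(i)}_s$.

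The crux of the proof — and the only point where hypothesis~\eqref{ad1} is used — is the bound $\mathcal J_t\le C'\,\E\int_t^T e^{\mu s+\lambda A_s}v(\bar Y_s)\,ds$. Neither piece of $\mathcal J_t$ has a fixed sign, and a Lebesgue integral cannot be compared termwise with a sum over jump times, so I would combine them by compensating the jump sum. Because $L$ has only $m$ (fixed, bounded) jump sizes and $\Delta H^{(i)}_s=\Delta L_s\,q_i(\Delta L_s)$ is a deterministic function of $\Delta L_s$, both $\Delta\bar Y_s=\gamma_s(\Delta L_s)$ and $\sum_i\beta^i_s\Delta H^{(i)}_s=\theta_s(\Delta L_s)$ for deterministic $\gamma_s(\cdot),\theta_s(\cdot)$; compensating the jump sum by $\nu(dx)\,ds$ and using the orthonormality $\langle H^{(i)},H^{(j)}\rangle_s=\delta_{ij}s$ to write $\sum_i\beta^i_s\bar Z^{(i)}_s\,ds=\int_{\R}\theta_s(x)\gamma_s(x)\,\nu(dx)\,ds$, one gets, modulo a zero-expectation martingale,
\[
 \mathcal J_t=-\,\E\!\int_t^T e^{\mu s+\lambda A_s}\!\!\int_{\R}\Big(v\big(\bar Y_{s^-}+\gamma_s(x)\big)-v(\bar Y_{s^-})-v'(\bar Y_{s^-})\big(1+\theta_s(x)\big)\gamma_s(x)\Big)\,\nu(dx)\,ds .
\]
A short case analysis on the signs of $\bar Y_{s^-}$ and $\bar Y_{s^-}+\gamma_s(x)$ now bounds the inner integrand \emph{below} by $-C\,v(\bar Y_{s^-})$: when the two values lie on the same side of $0$ the integrand is $\gamma_s(x)^2+2\bar Y^-_{s^-}\theta_s(x)\gamma_s(x)$, controlled by completing the square since $\theta_s$ is bounded; when they straddle $0$ it equals $-(\bar Y^-_{s^-})^2-2\bar Y_{s^-}\big(1+\theta_s(x)\big)\gamma_s(x)$, which is $\ge-v(\bar Y_{s^-})$ \emph{precisely because} $1+\theta_s(x)=1+\sum_i\beta^i_s\Delta H^{(i)}_s>0$ by~\eqref{ad1}. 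As $\nu$ is a finite measure (finitely many jump sizes) the $\nu(dx)$-integration only costs a constant, giving the desired bound on $\mathcal J_t$. (Conceptually, \eqref{ad1} is exactly the condition under which the adjoint linear equation $d\Gamma_s=\Gamma_{s^-}\big[a_s\,ds+c_s\,dA_s+h_s\,\overleftarrow{dB}_s+\sum_i\beta^i_s\,dH^{(i)}_s\big]$, $\Gamma_t=1$, has a strictly positive solution; when it fails, the counterexample recalled in Section~2 shows that comparison itself breaks down. I expect this jump estimate to be the main obstacle; everything else is routine.)

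Finally, choosing $\mu$ and $\lambda$ large enough (depending only on $K$ and $\nu(\R)$) that the term $\E\int_t^T e^{\mu s+\lambda A_s}v(\bar Y_s)(\mu\,ds+\lambda\,dA_s)$ on the left absorbs $(C+C')\,\E\int_t^T e^{\mu s+\lambda A_s}v(\bar Y_s)(ds+dA_s)$ on the right, one obtains $\E\big[e^{\mu t+\lambda A_t}v(\bar Y_t)\big]\le0$, hence $v(\bar Y_t)=0$, i.e.\ $Y^2_t\ge Y^1_t$, a.s.\ for each fixed $t$; by continuity of $Y^1$ and $Y^2$ this holds a.s.\ simultaneously for all $t\in[0,T]$. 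The ``resp.''\ case follows by the same computation with the roles of $(Y^1,Z^1)$ and $(Y^2,Z^2)$ exchanged, $f^2$ and $k^2$ now supplying the Lipschitz bounds and $\beta^i_s$ being the corresponding difference quotient of $f^2$ along $(Y^1,\widetilde Z)$.
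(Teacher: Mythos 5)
Your argument is essentially correct, and it is worth noting at the outset that the paper itself gives \emph{no} proof of Theorem \ref{l0a1}: it is stated with the single remark that it follows ``using the same argument as in \cite{amo12}'', i.e.\ by rerunning the proof of Theorem \ref{tc1} with the difference quotients of the one Lipschitz generator frozen along the trajectories of the other solution. Your linearisation is exactly that freezing, and your decomposition of $f^2(s,Y^2_{s^-},Z^2_s)-f^1(s,Y^1_{s^-},Z^1_s)$ into a nonnegative part, a $\sum_i\beta^i_s\bar Z^{(i)}_s$ part and a bounded multiple of $\bar Y_{s^-}$ uses precisely the hypotheses as stated (only $f^2\ge f^1$ along $(Y^2,Z^2)$ is needed, plus the {\bf (A2)} Lipschitz bounds for $f^1,k^1$, which give $|\beta^i_s|\le K$). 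The positivity step via It\^o on $v(x)=(x^-)^2$, compensation of the jump sum by $\nu(dx)\,ds$, and the identity $\sum_i\beta^i_s\bar Z^{(i)}_s=\int_\R\theta_s(x)\gamma_s(x)\,\nu(dx)$ coming from the orthonormality of the $q_i$ with respect to $x^2\nu(dx)$ is the standard route in this literature; the alternative, arguably closer to what \cite{amo12} and Zhou \cite{Q} do, is to exhibit the strictly positive Dol\'eans-Dade exponential of $\sum_i\int\beta^i\,dH^{(i)}$ and represent $\bar Y_t$ as a conditional expectation against it, which you mention parenthetically. Your case analysis is the right one and correctly isolates the single place where \eqref{ad1} enters: when $\bar Y_{s^-}<0\le\bar Y_{s^-}+\gamma_s(x)$ the integrand equals $-v(\bar Y_{s^-})-2\bar Y_{s^-}\big(1+\theta_s(x)\big)\gamma_s(x)\ge -v(\bar Y_{s^-})$ precisely because $1+\theta_s(x)>0$; this is exactly where the Poisson counterexample recalled in Section 2 lives.

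Two points should be made explicit rather than waved at, though neither changes the structure of your proof. First, $(x^-)^2$ is $C^1$ but not $C^2$, so you need the Meyer--It\^o formula for convex functions of a c\`adl\`ag semimartingale (harmless here: $v'$ is Lipschitz, the local time at $0$ drops out, the $H^{(i)}$ are purely discontinuous so $[\bar Y]^c$ comes only from $B$, and the backward $dB$ quadratic variation enters with the Pardoux--Peng sign, contributing at most $Kv(\bar Y_s)\,ds$). Second, absorbing the $dA_s$ drift requires taking $\lambda$ large in the weight $e^{\mu s+\lambda A_s}$, and hence the a priori finiteness of $\E\int_0^T e^{\lambda A_s}v(\bar Y_s)\,dA_s$; membership of the solutions in $\mathcal{E}_m(0,T)$ alone does not give this, so one must either invoke the exponentially weighted a priori estimates that the ``for all $\lambda>0$'' clauses in {\bf (A1)}/{\bf (H1)} are designed to produce, or localise at $\tau_n=\inf\{t:A_t\ge n\}\wedge T$ before letting $n\to\infty$. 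With these two standard technicalities supplied, your proof is complete and fills a gap the paper leaves to the reference.
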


\section{GBDSDEL with left-continuous coefficients}
The objective of this section is to prove an existence of minimal solution to GBDSDEL $(\xi, f,g, k, A)$ with left-continuous and linear growth coefficients. More precisely, when the following assumptions hold.

\begin{description}
\item {\bf (C1)}\; The terminal value $\xi \in \mathrm{L}^{2}(\Omega,
\mathcal{F}_{T}, \mathbb{P}, \mathbb{R})$ such that for all $\lambda>0$,\ \
$\E (e^{\lambda A_T}|\xi|^2)<\infty.$

\item {\bf (C2)}\; Let $f:\Omega \times [0,T]\times \mathbb{R} \times
\mathbb{R}^{m}\rightarrow \mathbb{R}$ and $g,k:\Omega \times [0,T]\times \mathbb{R} \rightarrow \mathbb{R}$ be jointly measurable processes, such that :

\textbf{(i)}\; for a.e. $t\in[0,T]$ and any $(y,z)\in{\R}^{}\times {\R}^{m},$ $f(t,y,z)$, $g(t,y)$ $k(t,y)$ are ${\cal F}_{t }-$measurable;

\textbf{(ii)}\; there exists a constant $K>0$ and three $\mathcal{F}_t$-measurable processes
$\{f_t,g_t, k_t:0\leq t\leq T\}$ with values in $[1,\infty[$, such that for all $t\in[0,T]$, $(y,z)\in{\R}^{}\times {\R}^{m},$\;
$$\left\{
  \begin{array}{ll}
   |f(t,y,z)| \leq f_t +K(|y|+\|z\|) & \hbox{} \\
 |g(t,y)| \leq g_t +K|y|,\;  \hbox{with\; } g(t,0)=0& \\
    |k(t,y)| \leq k_t +K|y| & \hbox{}
  \end{array}
\right.$$
\textbf{(iii)}\; for all $\mu, \lambda>0$,\; \; $\displaystyle{\E \left(\int_{0}^{T}e^{\mu t+\lambda A_t}f_t^2dt+\int_{0}^{T}e^{\mu t+\lambda A_t}g_t^2dt+
\int_{0}^{T}e^{\mu t+\lambda A_t}k_t^2dA_t\right)<\infty};$

\textbf{(iv)}\; there exists a constant $K>0$, such that for all $t\in[0,T]$, $(y,z), (y^{\prime},z^{\prime})\in{\R}^{}\times {\R}^{m},$
$$\left\{
  \begin{array}{ll}
   |f(t,y,z)-f(t,y,z^{\prime})|\leq K\|z-z^{\prime}\| & \hbox{} \\
   |g(t,y)-g(t,y^{\prime})|^2\leq K|y-y^{\prime}|^2 & \hbox{}
  \end{array}
\right.$$
\textbf{(v)}\; for all $(t,z)\in[0,T]\times {\R}^{m}$, $y\mapsto f(t,y,z)$ and $y\mapsto k(t,y)$ are left-continuous, for all $\omega$ a.e.

\textbf{(vi)}\; there exists two functions
$h:\mathbb{R} \times \mathbb{R}^{m}\rightarrow \mathbb{R}$ and $p:\mathbb{R} \rightarrow \mathbb{R}$, satisfying, for all $y\in \mathbb{R}$, $z, z^{\prime} \in \mathbb{R}^{
 m}$:
$$\left\{
  \begin{array}{ll}
 |h(y,z)| \leq K(|y|+\|z\|) \hbox{\, and\,}   |p(y)| \leq K|y|, & \hbox{} \\
 |h(y,z)-h(y,z^{\prime})|\leq K\|z-z^{\prime}\|, & \hbox{} \\
 y\mapsto h(y,z)\hbox{\, and\, } y\mapsto p(y) \; \hbox{are continuous} & \hbox{}
  \end{array}
\right.$$
such that for all $y\geq y^{\prime}$, $z, z^{\prime} \in \mathbb{R}^{
 m}$, $t\in[0,T]$, we have
$$f(t,y,z)-f(t,y^{\prime},z^{\prime}) \geq
h(y-y^{\prime},z-z^{\prime})\hbox{\, and\, } k(t,y)-k(t,y^{\prime}) \geq
p(y-y^{\prime}).$$
\end{description}

In order to reach our objective, we need some preliminary results. 

\smallskip
First, we prove the following comparison theorem which requires at least one continuous function.
\begin{theorem}[Conparison with at least one continuous function]\label{l0a2}
Let $g$ and $\xi^i$ $(i=1,2)$ satisfy ${\bf (C1)}$ and ${\bf (C2)}$. Assume that GBDSDEL $(\xi^{i}, f^{i},g, k^{i}, A)$  have solutions $(Y^{i},Z^{i})\in {\cal E}_{m} (0,T )$, $i=1,2$, respectively. Assume moreover that

\begin{itemize}
\item $\xi^2\geq \xi^1, \ \P$-a.s.
\item $f^1$ and $k^{1}$ satisfy ${\bf(H2)}$ such that $f^2(t,Y_{t}^2,Z_{t}^2)\geq f^1(t,Y_{t}^2,Z_{t}^2)$, and $k^2(t,Y_{t}^2)\geq k^1(t,Y_{t}^2)$ \ $\P$-a.s., for all $t\in [0,T]$, and $(Y^{1},Z^{1})$ is the minimal solution, with
$$\displaystyle\beta^i_t=\frac{f^1(t,Y_{t}^{2},\widetilde{Z}_{t}^{(i)})-
 f^1(t,Y_{t}^{2},\widetilde{Z}_{t}^{(i-1)})}{Z_{t}^{2(i)}- Z_{t}^{1(i)}}
  \mathbf{1}_{\left\{Z_{t}^{2(i)}\neq Z_{t}^{1(i)}\right\}},$$
where
$\widetilde{Z}_{t}^{(i)}=\Big(Z_{t}^{2(1)},Z_{t}^{2(2)},...,Z_{t}^{2(i)},Z_{t}^{1(i+1)},...,Z_{t}^{1(m)}\Big)$
\item[]\text{} \Big(resp. $f^2$ and $k^{2}$ satisfy ${\bf(H2)}$ such that $f^2(t,Y_{t}^1,Z_{t}^1)\geq f^1(t,Y_{t}^1,Z_{t}^1)$, and $k^2(t,Y_{t}^1)\geq k^1(t,Y_{t}^1)$ \ $\P$-a.s., for all $t\in [0,T]$, and $(Y^{2},Z^{2})$ is the maximal solution, with
$$\displaystyle\beta^i_t=\frac{f^2(t,Y_{t}^{1},\widetilde{Z}_{t}^{(i)})-
 f^2(t,Y_{t}^{1},\widetilde{Z}_{t}^{(i-1)})}{Z_{t}^{1(i)}- Z_{t}^{2(i)}}
  \mathbf{1}_{\left\{Z_{t}^{1(i)}\neq Z_{t}^{2(i)}\right\}},$$
\end{itemize}
where
$\widetilde{Z}_{t}^{(i)}=\Big(Z_{t}^{1(1)},Z_{t}^{1(2)},...,Z_{t}^{1(i)},Z_{t}^{2(i+1)},...,Z_{t}^{2(m)}\Big)$\Big),
such that
\begin{eqnarray}
\sum_{i=1}^{m}\beta_t^i\Delta H_t^{(i)}>-1, \ dt\otimes d\P\mbox{-a.s}.\label{ad2}
\end{eqnarray}

\medskip\noindent  Then, $Y_t^2\geq Y_t^1$\ \ a.s., for all $t\in[0,T]$.
\end{theorem}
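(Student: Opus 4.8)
The plan is to reduce the statement to the comparison theorem for Lipschitz coefficients, Theorem \ref{l0a1}, by approximating the continuous coefficient $f^1$ together with $k^1$ (resp. $f^2$ together with $k^2$) from below (resp. from above) by globally Lipschitz coefficients, and then exploiting that $(Y^1,Z^1)$ (resp. $(Y^2,Z^2)$) is the minimal (resp. maximal) solution, hence the monotone limit of the corresponding approximate solutions.

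I treat the first case, where $f^1,k^1$ satisfy ${\bf (H2)}$ and $(Y^1,Z^1)$ is the minimal solution of GBDSDEL $(\xi^1,f^1,g,k^1,A)$. For $n\geq K$ I would introduce the inf-convolutions
\begin{equation*}
f^1_n(t,y,z)=\inf_{u\in\mathbb{Q}}\big\{f^1(t,u,z)+n\,|y-u|\big\},\qquad
k^1_n(t,y)=\inf_{u\in\mathbb{Q}}\big\{k^1(t,u)+n\,|y-u|\big\}.
\end{equation*}
As in the proof of Theorem \ref{te} and in \cite{amo12}, these are $\mathcal{F}_t$-measurable, globally Lipschitz in $(y,z)$ (Lipschitz constant $n$ in $y$, and the same constant $K$ in $z$ for $f^1_n$, inherited from ${\bf (H2)(iv)}$), satisfy the linear growth bounds $|f^1_n(t,y,z)|\leq f_t+K(|y|+\|z\|)$ and $|k^1_n(t,y)|\leq k_t+K|y|$, are nondecreasing in $n$, and converge monotonically $f^1_n\uparrow f^1$, $k^1_n\uparrow k^1$ by the continuity in $y$ assumed in ${\bf (H2)(v)}$. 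Hence each $(\xi^1,f^1_n,g,k^1_n,A)$ satisfies ${\bf (A1)}$ and ${\bf (A2)}$, so by Theorem \ref{te1} it has a unique solution $(Y^{1,n},Z^{1,n})\in\mathcal{E}_m(0,T)$; and, exactly as in the construction of the minimal solution in Theorem \ref{te}, the sequence $(Y^{1,n})_n$ is nondecreasing with $Y^{1,n}_t\uparrow Y^1_t$ for all $t$, a.s., and $(Y^{1,n},Z^{1,n})\to(Y^1,Z^1)$ in $\mathcal{E}_m(0,T)$.

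Then, for fixed $n$, I would apply Theorem \ref{l0a1} to the pair $(Y^{1,n},Z^{1,n})$ (solution of the Lipschitz GBDSDEL $(\xi^1,f^1_n,g,k^1_n,A)$) and $(Y^2,Z^2)$. Indeed $\xi^2\geq\xi^1$ a.s.; $f^1_n,k^1_n$ satisfy ${\bf (A2)}$; and since $f^1_n\leq f^1$ and $k^1_n\leq k^1$,
\begin{equation*}
f^2(t,Y^2_t,Z^2_t)\geq f^1(t,Y^2_t,Z^2_t)\geq f^1_n(t,Y^2_t,Z^2_t),\qquad
k^2(t,Y^2_t)\geq k^1(t,Y^2_t)\geq k^1_n(t,Y^2_t),
\end{equation*}
so the domination along $(Y^2,Z^2)$ also holds for $(f^1_n,k^1_n)$. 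Since the inf-convolution regularizes only the $y$-variable, the $z$-dependence of $f^1_n$ retains the structure of that of $f^1$: the relevant difference quotients stay bounded by $K$ and keep the sign behaviour that makes \eqref{ad2} hold, so the jump condition \eqref{ad1} for the $\beta^i$ built from $f^1_n$ follows from \eqref{ad2}, just as in the verification of the corresponding condition for the approximating scheme in \cite{amo12}. Theorem \ref{l0a1} then gives $Y^2_t\geq Y^{1,n}_t$ for all $t$, a.s., and letting $n\to\infty$ with $Y^{1,n}_t\uparrow Y^1_t$ yields $Y^2_t\geq Y^1_t$ for all $t$, a.s.

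The second case in the statement, where $f^2,k^2$ satisfy ${\bf (H2)}$ and $(Y^2,Z^2)$ is the maximal solution, is entirely symmetric: one approximates $f^2,k^2$ from above by sup-convolutions $f^2_n\downarrow f^2$, $k^2_n\downarrow k^2$, so that the unique solutions $(Y^{2,n},Z^{2,n})$ of $(\xi^2,f^2_n,g,k^2_n,A)$ decrease to $(Y^2,Z^2)$, applies Theorem \ref{l0a1} to $(Y^1,Z^1)$ and $(Y^{2,n},Z^{2,n})$ (using $f^2_n\geq f^2\geq f^1$ and $k^2_n\geq k^2\geq k^1$ along $(Y^1,Z^1)$), and lets $n\to\infty$. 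I expect the delicate point to be precisely the verification of \eqref{ad1} for the Lipschitz approximants, so that Theorem \ref{l0a1} can be invoked along the whole sequence; this is the reason \eqref{ad2} must be imposed at the level of $f^1$ (resp. $f^2$) rather than only for the two given solutions, and it is handled by the same mechanism already used inside the proof of Theorem \ref{te}. The remaining ingredient, the identification of $Y^1$ (resp. $Y^2$) as the monotone $\mathcal{E}_m(0,T)$-limit of the approximate solutions, is imported from that proof as well.
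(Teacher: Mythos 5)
Your proposal follows essentially the same route as the paper: inf-convolution approximation of $f^1,k^1$ by Lipschitz coefficients $f^1_n\leq f^1$, $k^1_n\leq k^1$, existence and uniqueness of the approximating solutions via Theorem \ref{te1}, identification of their monotone limit with the minimal solution $(Y^1,Z^1)$ as in \cite{amo12}, preservation of the jump condition \eqref{ad2} because the inf-convolution acts only in $y$ (the paper even notes that the $z$-increments of $f^1_n$ coincide with those of $f^1$), application of Theorem \ref{l0a1} for each $n$, and passage to the limit. The argument is correct and matches the paper's proof.
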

\begin{proof}
\noindent Assume that $f^1$ and $k^{1}$ satisfy ${\bf(H2)}$. Then, 
by Lemma 3.2 in  \cite{amo12}, there exist sequences of functions $f_n^{1}$ and $k_n^{1}$ associated to $f^{1}$ and $k^{1}$, respectively, defined by :
$$f_{n}^{1}(t,y,z)=\underset{u\in \Q}\inf \left\{f^{1}(t,u,z)+n|y-u|\right\} \; \; \text{and} \; \; k_{n}^{1}(t,y)=\underset{u\in \Q}\inf \left\{k^{1}(t,u)+n|y-u|\right\} $$
which are Lipschitz functions such that $f_{n}^{1} \leq f^{1}$ and $k_{n}^{1} \leq k^{1}$, for all $n\geq K$.
Furthermore, $f_n^{1}$ and $k_n^{1}$ converge suitably to $f^{1}$ and $k^{1}$, respectively. Hence, by Theorem \ref{te1}, it follows that, for every $n\geq K$, GBDSDEL $(\xi^{1}, f_{n}^{1},g, k_{n}^{1}, A)$ has a unique measurable solution $(Y^{1,n}, Z^{1,n})$. Moreover, from the proof of the Theorem 3.1 in  \cite{amo12}, it follows that $(Y^{1,n}, Z^{1,n})$ converges to the minimal solution $(Y^1,Z^1)$ of GBDSDEL $(\xi^{1}, f^{1},g, k^{1}, A)$.
On the other hand, since $f^1(t,Y^2,Z^2)\leq f^2(t,Y^2,Z^2)$ and $k^1(t,Y^2)\leq k^2(t,Y^2)$ \ a.s., we have  $$f_n^1(t,Y^2,Z^2)\leq f^2(t,Y^2,Z^2)\; \; and \; \;k_n^1(t,Y^2)\leq k^2(t,Y^2),\; \; a.s.,\; \; \text{for all}\; \; n\geq K.$$
Moreover, by the definition of $f_{n}$, it follows that
$$f_{n}^1(t,Y_{t}^{2},\widetilde{Z}_{t}^{(i)})-
 f_{n}^1(t,Y_{t}^{2},\widetilde{Z}_{t}^{(i-1)})= f^1(t,Y_{t}^{2},\widetilde{Z}_{t}^{(i)})-
 f^1(t,Y_{t}^{2},\widetilde{Z}_{t}^{(i-1)}),$$
and inequality \eqref{ad2} still holds.
\noindent Therefore, by Theorem \ref{l0a1}, we get  $Y^{1,n}\leq Y^2$ a.s., for all $n\geq K$. Hence, we have $Y^{1}\leq Y^2$.
\end{proof}
Next, we establish the following result, which will be useful in the sequel.
\begin{lemma}\label{l1}
Let $g$, $\xi$ satisfy ${\bf(C1)}$ and ${\bf(C2)}$, and $h$ and $p$ be the functions appear in assumption ${\bf(C2)}$.
Assume moreover that, for any $y\in\mathbb{R}$, the rate of change of $h(y,\cdot)$ :
$$\displaystyle\alpha^i=\frac{h(y^{},\widetilde{z}^{(i)})-
 h(y^{},\widetilde{z}^{(i-1)})}{z^{2(i)}- z^{1(i)}}
  \mathbf{1}_{\left\{z^{2(i)}\neq z^{1(i)}\right\}},$$
where 
$\widetilde{z}^{(i)}=\Big(z^{2(1)},z^{2(2)},...,z^{2(i)},z^{1(i+1)},...,z^{1(m)}\Big)$, for all $z^1,z^2\in \R^m$
satisfies the following relation :
\begin{eqnarray}
\sum_{i=1}^{m}\alpha^i\Delta H_t^{(i)}>-1, \ dt\otimes d\P\mbox{-a.s}.\label{ad3}
\end{eqnarray}
Let $\phi$ and $\psi$ be processes such that, for a.e. $t\in[0,T]$, $\phi(t)$ and $\psi(t)$ are ${\cal F}_{t}-$measurable and ${\E}\left(\displaystyle\int_{0}^{T }\left|
\phi(s)\right| ^{2}ds+\int_{0}^{T }\left|
\psi(s)\right| ^{2}dA_s\right) <+\infty$.
We consider the following GBDSDEL :
\begin{eqnarray}
Y_{t}&=&\xi+\int_{t}^{T}\left(h(Y_{s^{-}},Z_s)+\phi(s)\right)ds+\int_{t
}^{T}\left(p(Y_{s^{-}})+\psi(s)\right)dA_s+\int_{t}^{T}g(s,Y_{s^-})\,d\overleftarrow{B}_{s}\nonumber\\
&&-\sum_{i=1}^{m}\int_{t}^{T}Z^{(i)}_{s}dH^{(i)}_{s},\,\ 0\leq t\leq
T.\label{2}
\end{eqnarray}
Then, the GBDSDEL \eqref{2} has a solution $(Y,Z) \in{\cal E}_{m}\left(0,T \right)$. Moreover, for any solution $(Y,Z)$ of GBDSDEL \eqref{2}, if $\xi \geq 0$, and
$\phi(t)\geq0$ and $\psi(t)\geq0$ for any $t\in [0,T]$, we have $Y_t\geq 0,$ \ $\P-$a.s. for all $t\in [0,T]$.
\end{lemma}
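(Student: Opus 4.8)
The plan is to read off existence from Theorem \ref{te} and the sign property from the comparison Theorem \ref{l0a2}, after first identifying $(0,0)$ as the minimal solution of the associated ``zero-data'' equation. For existence, I would set $f(t,y,z)=h(y,z)+\phi(t)$ and $k(t,y)=p(y)+\psi(t)$, so that \eqref{2} is exactly the GBDSDEL $(\xi,f,g,k,A)$ of the form \eqref{a111}. Using the linear growth bounds $|h(y,z)|\le K(|y|+\|z\|)$ and $|p(y)|\le K|y|$, the Lipschitz-in-$z$ continuity of $h$, and the continuity of $y\mapsto h(y,z)$ and $y\mapsto p(y)$ (all from ${\bf(C2)}$), together with the hypotheses on $g$ and $\xi$ and the square-integrability of $\phi,\psi$, one checks that $(\xi,f,g,k,A)$ satisfies ${\bf(H1)}$ and ${\bf(H2)}$ (with, e.g., $f_t=1+|\phi(t)|$ and $k_t=1+|\psi(t)|$; the only slightly delicate point is carrying the $e^{\lambda A_t}$ weight onto the $\phi,\psi$ terms). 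Theorem \ref{te} then produces a (minimal) solution $(Y,Z)\in\mathcal{E}_m(0,T)$ of \eqref{2}.

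For the sign statement, I would first note that the GBDSDEL $(0,h,g,p,A)$ admits $(0,0)$ as a solution: $h(0,0)=0$ and $p(0)=0$ follow from the linear growth bounds, and $g(t,0)=0$ is part of ${\bf(C2)}$, so both sides of the equation vanish identically. Then I would show that $(0,0)$ is in fact the \emph{minimal} solution of $(0,h,g,p,A)$, mimicking the approximation used in the proof of Theorem \ref{l0a2}: introduce the inf-convolutions $h_n(y,z)=\inf_{u\in\Q}\{h(u,z)+n|y-u|\}$ and $p_n(y)=\inf_{u\in\Q}\{p(u)+n|y-u|\}$, which for $n\ge K$ are globally Lipschitz, satisfy $h_n\le h$, $p_n\le p$, inherit the linear growth with the same constant, and increase to $h$, $p$; by Theorem \ref{te1} the Lipschitz GBDSDEL $(0,h_n,g,p_n,A)$ has a unique solution, and since the growth bounds force $h_n(0,0)=0$ and $p_n(0)=0$ for $n\ge K$ while $g(t,0)=0$, that unique solution must be $(0,0)$. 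As in the proof of Theorem~3.1 in \cite{amo12}, the minimal solution of $(0,h,g,p,A)$ is the increasing limit of these approximating solutions, hence equal to $(0,0)$.

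Finally, let $(Y,Z)$ be any solution of \eqref{2} with $\xi\ge0$, $\phi\ge0$, $\psi\ge0$. I would apply Theorem \ref{l0a2} with ``upper'' data $(\xi^2,f^2,g,k^2,A)=\big(\xi,\,h(\cdot,\cdot)+\phi,\,g,\,p(\cdot)+\psi,\,A\big)$ and solution $(Y^2,Z^2)=(Y,Z)$, and ``lower'' data $(\xi^1,f^1,g,k^1,A)=(0,h,g,p,A)$ with minimal solution $(Y^1,Z^1)=(0,0)$ (observe that $f^1=h$, $k^1=p$ satisfy ${\bf(H2)}$). The hypotheses are met: $\xi^2\ge0=\xi^1$; $f^2(t,Y_t,Z_t)=h(Y_t,Z_t)+\phi(t)\ge h(Y_t,Z_t)=f^1(t,Y_t,Z_t)$ and $k^2(t,Y_t)=p(Y_t)+\psi(t)\ge p(Y_t)=k^1(t,Y_t)$ since $\phi,\psi\ge0$; and, because $Z^1\equiv0$, the coefficient $\beta^i_t$ of Theorem \ref{l0a2} formed from $f^1=h$ at $Y^2_t=Y_t$ coincides with the rate of change $\alpha^i$ of $h(Y_t,\cdot)$ between $z^1=0$ and $z^2=Z_t$, so \eqref{ad3} gives $\sum_{i=1}^m\beta^i_t\Delta H_t^{(i)}>-1$, $dt\otimes d\P$-a.s., i.e.\ \eqref{ad2} holds. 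Theorem \ref{l0a2} then yields $Y_t=Y^2_t\ge Y^1_t=0$ a.s.\ for all $t\in[0,T]$. The main obstacle is the middle step: one cannot conclude by comparing with an arbitrary solution of the zero equation, one has to single out $(0,0)$ as the minimal one, which is precisely what forces the inf-convolution/stability detour; a secondary nuisance is ensuring that the exponential-weight integrability demanded by Theorem \ref{te} really is satisfied by the $\phi$ and $\psi$ terms.
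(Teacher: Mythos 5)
Your argument is correct and follows essentially the same route as the paper: both rest on the inf-convolution approximations $h_n,p_n$, the observation that the zero-data Lipschitz equations have unique solution $(0,0)$, the Lipschitz comparison theorem, and passage to the limit via the minimal-solution characterization --- the paper merely applies the comparison at the level of the approximations of \eqref{2} (via Theorem \ref{tc1}) and then invokes minimality, whereas you apply it once at the limit level through Theorem \ref{l0a2} after identifying $(0,0)$ as the minimal solution of $(0,h,g,p,A)$. The integrability caveat you raise (Lemma \ref{l1} assumes only unweighted square-integrability of $\phi,\psi$, while Theorems \ref{te} and \ref{te1} formally demand the $e^{\mu t+\lambda A_t}$-weighted version) is a gap the paper's own proof shares and leaves implicit.
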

\begin{proof}
Since $h$ and $p$ are continuous and linear growth, by virtue of Theorem \ref{te}, the GBDSDEL \eqref{2} has at least one solution $(Y,Z) \in{\cal E}\left(0,T \right)$. 
Moreover by Lemma 3.2 in  \cite{amo12}, there exist non-decreasing sequences of Lipschitz functions $h_n^{}$ and $p_n^{}$ associated to $h$ and $p$, respectively, defined by:
$$h_{n}^{}(y,z)=\underset{u\in \Q}\inf \left\{h(u,z)+n|y-u|\right\} \; \; \text{and} \; \; p_{n}(y)=\underset{u\in \Q}\inf \left\{p(u)+n|y-u|\right\} ,$$
with $|h_{n}^{}(y,z)|\leq K(|y|+|z|)$ and $|p_{n}(y)|\leq K|y|$. 

\noindent In view of Theorem \ref{te1}, 
for every $n\geq K$, the equations
\begin{eqnarray}
Y_{t}^{n}&=&\xi+\int_{t}^{T}\left(h_{n}(Y_{s^{-}}^{n},Z_s^{n})+\phi(s)\right)ds+\int_{t
}^{T}\left(p_{n}(Y_{s^{-}}^{n})+\psi(s)\right)dA_s+\int_{t}^{T}g(s,Y_{s^-}^{n})\,d\overleftarrow{B}_{s}\nonumber\\
&&-\sum_{i=1}^{m}\int_{t}^{T}Z^{n(i)}_{s}dH^{(i)}_{s},\label{s}
\end{eqnarray}
and
\begin{eqnarray}
y_{t}^{n}=\int_{t}^{T}h_{n}(y_{s^{-}}^{n},z_s^{n})ds+\int_{t
}^{T}p_{n}(y_{s^{-}}^{n})dA_s+\int_{t}^{T}g(s,y_{s^-}^{n})\,d\overleftarrow{B}_{s}
-\sum_{i=1}^{m}\int_{t}^{T}z^{n(i)}_{s}dH^{(i)}_{s}\label{t}
\end{eqnarray}
have unique solutions $(Y^n,Z^n)$ and $(y^n,z^n)$ in $\mathcal{E}(0,T)$, respectively.

\noindent Since $g(s,0)=h_n(0,0)=p_n(0)=0$, the solution $(y^n,z^n)$ of \eqref{t} is $(y^n,z^n)=(0,0)$.

\noindent Now, since  $\xi \geq 0$, $\phi(t)\geq0$, $\psi(t)\geq0$, for any $t\in [0,T]$ and inequality \eqref{adprop} still holds for $h_n$ with any $n\geq K$, it follows from Theorem \ref{tc1} that, for any $n\geq K$, $Y^{n}\geq y^n=0$, a.s..

\noindent Moreover, the solution $(Y^n,Z^n)$ of \eqref{s} converges to the minimal solution $(\underline{Y},\underline{Z})$ of \eqref{2} (see the proof of the Theorem 3.1 in  \cite{amo12}). Therefore, $\underline{Y}\geq0$, a.s.. Consequently, for any solution $(Y,Z)$ of GBDSDEL \eqref{2}, we have $Y\geq0.$
\end{proof}
\begin{remark}\label{rk}
Lemma \ref{l1} still holds if $h$ and $p$ depend, as well as, on
$t\in[0,T]$ with $h(t,0,0)\geq0$ and $p(t,0)\geq0$, for all $t\in[0,T]$, such that
for a.e. $t\in[0,T]$, $h(t,0,0)$ and $p(t,0)$ are ${\cal F}_{t}-$measurable
and ${\E}\left(\displaystyle\int_{0}^{T }\left|
h(t,0,0)\right| ^{2}ds+\int_{0}^{T }\left|
p(t,0)\right| ^{2}dA_s\right) <+\infty$, and $|h(t,y,z)-h(t,0,0)| \leq K(|y|+\|z\|)$ and $|p(t,y)-p(t,0)| \leq K|y|$. Indeed, it suffices to replace $(h,\phi)$ and $(p,\psi)$ in Lemma \ref{l1} by :
$$\left\{
  \begin{array}{ll}
    \tilde{h}(y,z) = h(t,y,z) -h(t,0,0) \hbox{\; and \;} \tilde{\phi}(t)= \phi(t)+h(t,0,0),& \\
\\
    \tilde{p}(t,y) = p(t,y) -p(t,0) \hbox{ \; and \;} \tilde{\psi}(t)= \psi(t)+p(t,0).&
  \end{array}
\right.$$
\end{remark}

\smallskip
\noindent Now, we are ready to prove the existence of solutions for GBDSDEL \eqref{a111}.
\begin{theorem}\label{te3}
Under the assumptions $({\bf C1})$ and $({\bf C2})$, the GBDSDEL \eqref{a111} has a minimal solution $(\underline{Y},\underline{Z}) \in{\cal E}_{m}\left(0,T \right)$. 
\end{theorem}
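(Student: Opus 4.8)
The plan is to build the minimal solution of GBDSDEL \eqref{a111} by a monotone (Picard-type) iteration anchored at a known solvable equation, using Lemma \ref{l1} as the engine for each step and the comparison Theorem \ref{l0a2} to control monotonicity and minimality. First I would use assumption \textbf{(C2)(vi)}: for $y\geq y'$ we have $f(t,y,z)-f(t,y',z')\geq h(y-y',z-z')$ and $k(t,y)-k(t,y')\geq p(y-y')$, with $h,p$ continuous, linear growth, Lipschitz in $z$, and satisfying the jump condition \eqref{ad3}. Rewrite $f(t,y,z)=h(y-Y_s^{-},z-Z_s)+\phi^{Y,Z}(s)$ type decompositions is not quite right; instead, the standard device is to define, starting from an initial pair $(Y^0,Z^0)$, the sequence $(Y^{n+1},Z^{n+1})$ as the solution of
\begin{eqnarray*}
Y^{n+1}_t&=&\xi+\int_t^T\!\big(h(Y^{n+1}_{s^-}-Y^n_{s^-},Z^{n+1}_s-Z^n_s)+\phi^n(s)\big)ds
+\int_t^T\!\big(p(Y^{n+1}_{s^-}-Y^n_{s^-})+\psi^n(s)\big)dA_s\\
&&+\int_t^T g(s,Y^{n+1}_{s^-})\,d\overleftarrow{B}_s-\sum_{i=1}^m\int_t^T Z^{n+1,(i)}_s\,dH^{(i)}_s,
\end{eqnarray*}
where $\phi^n(s)=f(s,Y^n_{s^-},Z^n_s)-h(0,0)$ and $\psi^n(s)=k(s,Y^n_{s^-})-p(0)$ are chosen so that at the ``diagonal'' the equation reduces to \eqref{a111}. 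Each such equation is exactly of the form treated in Lemma \ref{l1} (via Remark \ref{rk} to absorb the $t$-dependence and nonzero $h(t,0,0),p(t,0)$), so it has a minimal solution in $\mathcal{E}_m(0,T)$; this is the starting point $(Y^0,Z^0)$ as well, obtained by taking $h\equiv 0$, $p\equiv 0$ or rather by solving \eqref{2} with $\phi,\psi$ built from the linear-growth bounds $f_t,k_t$ of \textbf{(C2)(ii)}.

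The key steps, in order: (1) verify the integrability $\E\big(\int_0^T|\phi^n(s)|^2ds+\int_0^T|\psi^n(s)|^2dA_s\big)<\infty$ at each stage, which follows from the linear growth of $f,k$ in \textbf{(C2)(ii)}, the a priori $\mathcal{E}_m$-bounds on $(Y^n,Z^n)$, and \textbf{(C2)(iii)}; (2) apply Lemma \ref{l1} to produce $(Y^{n+1},Z^{n+1})$ and, crucially, its non-negativity assertion with $\xi,\phi,\psi$ shifted appropriately to get the comparison $Y^{n+1}\geq Y^n$ (monotone increase): here one compares the equation for $Y^{n+1}$ with that for $Y^n$, using \textbf{(C2)(vi)} to bound the generator increments below by $h(\cdot,\cdot)$ and $p(\cdot)$ and invoking the sign part of Lemma \ref{l1}; (3) derive a uniform bound $\sup_n\|(Y^n,Z^n)\|_{\mathcal{E}_m}<\infty$ by the usual It\^o/Gronwall estimate with an exponential weight $e^{\mu t+\lambda A_t}$, using the linear growth conditions and \eqref{ad3} (or rather the jump condition on $h$) to handle the jump integrals; (4) pass to the limit: $Y^n\nearrow\underline{Y}$ monotonically and in $\mathcal{S}^2$, $Z^n\to\underline{Z}$ weakly then strongly in $\mathcal{M}^2$ via the martingale-representation/It\^o-isometry argument (as in the proof of Theorem 3.1 of \cite{amo12}), and the left-continuity of $f,k$ in $y$ from \textbf{(C2)(v)} guarantees $f(s,Y^n_{s^-},Z^n_s)\to f(s,\underline Y_{s^-},\underline Z_s)$ along the increasing sequence, so $(\underline Y,\underline Z)$ solves \eqref{a111}; (5) minimality: if $(Y,Z)$ is any solution, show by induction that $Y^n\leq Y$ for all $n$ — for $n=0$ use that $Y$ solves an equation dominating the defining equation of $Y^0$ and apply Theorem \ref{l0a2}, and for the inductive step compare the equation for $Y^{n+1}$ with \eqref{a111} for $Y$, again using \textbf{(C2)(vi)} to get the generator inequality and the comparison theorem (with $Y^{n+1}$ playing the role of the minimal solution as required in the hypotheses of Theorem \ref{l0a2}). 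Passing $n\to\infty$ gives $\underline Y\leq Y$.

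The main obstacle is step (4)/(5) together: specifically, showing that the monotone limit of $(Y^n,Z^n)$ is genuinely a solution of \eqref{a111} rather than of a perturbed equation, and simultaneously that the comparison theorem \ref{l0a2} applies at the inductive step. The delicate point is that \ref{l0a2} demands one side be a minimal (resp.\ maximal) solution and that the auxiliary jump condition \eqref{ad2} hold for the relevant $\beta^i_t$; one must check that with $f^1$ taken to be the ``shifted'' generator $h(\cdot-Y^n_{s^-},\cdot-Z^n_s)+\phi^n$, the rate-of-change coefficients $\beta^i_t$ coincide with the $\alpha^i$ of \eqref{ad3} for $h$, so that \eqref{ad3} delivers exactly \eqref{ad2}. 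The left-continuity (rather than full continuity) of $f$ in $y$ forces the iteration to be monotone \emph{increasing} (so that $Y^n_{s^-}\nearrow$ and left-continuity is the right one-sided regularity to pass the limit inside $f$), which is why we get the \emph{minimal} solution; this interplay between the direction of monotonicity and the one-sided continuity is the subtle structural point of the proof.
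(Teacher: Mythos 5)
Your proposal follows essentially the same route as the paper: the same Picard-type iteration $\underline{Y}^{n+1}$ defined through the shifted generators $f(s,\underline{Y}^n,\underline{Z}^n)+h(\cdot-\underline{Y}^n,\cdot-\underline{Z}^n)$ and $k(s,\underline{Y}^n)+p(\cdot-\underline{Y}^n)$, monotonicity via the sign part of Lemma \ref{l1}, uniform a priori bounds, passage to the limit using the left-continuity of $f,k$ in $y$ along the increasing sequence, and minimality by induction through Theorem \ref{l0a2}. The only cosmetic difference is that the paper pins down the starting point $(\underline{Y}^0,\underline{Z}^0)$ and an upper barrier $\overline{Y}^0$ as solutions of explicit Lipschitz GBDSDELs with generators $\mp(f_t+K|y|+K\|z\|)$, using the resulting sandwich to bound the $Y$-part, but this is within the scope of what you sketch.
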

\begin{proof}
Let us construct an approximate sequence using a Picard-type iteration, by virtue of the results on GBDSDEL with continuous and linear growth coefficients, due to \cite{amo12}.

\smallskip\noindent
First, let $(\underline{Y}^0,\underline{Z}^0)\in {\cal E}_{m}\left(0,T \right)$ be the unique solution of
\begin{eqnarray}\label{11a}
\underline{Y}_t^{0}&=&\xi+\int_{t}^{T}\left(-f_s-K|\underline{Y}_s^{0}|-K\|\underline{Z}_s^{0}\|\right)ds
+\int_{t}^{T}\left(-k_s-K|\underline{Y}_s^{0}|\right)dA_s
+\int_{t}^{T}g(s,\underline{Y}_s^{0})\overleftarrow{dB}_{s}\notag\\ &&-
\sum_{i=1}^{m}\int_{t}^{T}\underline{Z}^{0(i)}_{s}dH^{(i)}_{s}.
\end{eqnarray}
Now let $\left\{(\underline{Y}^n,\underline{Z}^n)\right\}_{n\geq0}$ be a sequence in ${\cal E}_{m}\left(0,T \right)$ defined recursively by $(\underline{Y}^0,\underline{Z}^0)$ the solution of \eqref{11a} and for any $n\geq 1$, 
\begin{eqnarray}\label{11}
\underline{Y}_t^n&=&\xi+\int_{t}^{T}\left(f(s,\underline{Y}_s^{n-1},\underline{Z}_s^{n-1})
+h(\underline{Y}_s^n-\underline{Y}_s^{n-1},\underline{Z}_s^n-\underline{Z}_s^{n-1})\right)ds
\\ \notag &&+\int_{t}^{T}\left(k(s,\underline{Y}_s^{n-1})
+p(\underline{Y}_s^n-\underline{Y}_s^{n-1})\right)dA_s
+\int_{t}^{T}g(s,\underline{Y}_s^{n})\overleftarrow{dB}_{s}-
\sum_{i=1}^{m}\int_{t}^{T}\underline{Z}^{n(i)}_{s}dH^{(i)}_{s}.
\end{eqnarray}
For $n\geq1$ and $(\underline{Y}^{n-1},\underline{Z}^{n-1})\in{\cal E}_{m}\left(0,T \right)$, let, for any $t \in[0,T]$, $(y, z) \in \mathbb{R}\times \mathbb{R}^{ m}$,
$$\left\{
  \begin{array}{ll}
    \underline{F}_{n}(t,y,z)=f(t,\underline{Y}_t^{n-1},\underline{Z}_t^{n-1})
+h(y-\underline{Y}_t^{n-1},z-\underline{Z}_t^{n-1}), & \hbox{} \\
\\
  \underline{K}_{n}(t,y)=k(t,\underline{Y}_t^{n-1})+p(y-\underline{Y}_t^{n-1}).   & \hbox{}
  \end{array}
\right.$$
Since $h$ and $p$ are continuous, $\underline{F}_{n}$ and  $\underline{K}_{n}$ are continuous and for any $t \in[0,T]$, $(y, z) \in \mathbb{R}
\times \mathbb{R}^{ m}$,
\begin{eqnarray}\label{F}
|\underline{F}_{n}(t,y,z)|\leq \underline{\varphi}_{n}(t)+K(|y|+\|z\|) \; \; \text{and} \; \;
|\underline{K}_{n}(t,y)|\leq \underline{\psi}_{n}(t)+K|y|,
\end{eqnarray}
where,
$$\underline{\varphi}_{n}(t)=f_t+2K(|\underline{Y}_t^{n-1}|+\|\underline{Z}_t^{n-1}\|)\; \; \text{and} \; \;  \underline{\psi}_{n}(t)=k_t+2K|\underline{Y}_t^{n-1}|.$$
Consequently, since for $n\geq1$, $(\underline{Y}^{n-1},\underline{Z}^{n-1})\in{\cal E}_{m}\left(0,T \right)$ and from  {\bf (C2) (iii)}, we have for all $\mu,\,\lambda>0$ and for $n\geq1$
$$
\displaystyle{\E \left(\int_{0}^{T}e^{\mu t+\lambda A_t}\underline{\varphi}_{n}^2(t)dt+\int_{0}^{T}e^{\mu t+\lambda A_t}\underline{\psi}_{n}^2(t)dA_t\right)<\infty}.
$$
Therefore, by Theorem \ref{te}, GBDSDEL $(\xi,\underline{F}_{n},g,\underline{K}_{n},A )$, i.e., Eq. \eqref{11} has a minimal solution, which we steal denote by $(\underline{Y}^{n},\underline{Z}^{n})\in{\cal E}_{m}\left(0,T \right)$, for each $n\geq1$. To complete the proof, it suffices to show that the sequence $(\underline{Y}^{n},\underline{Z}^{n})$ converges to $(\underline{Y},\underline{Z})$ which is the minimal solution of GBDSDEL $(\xi,f,g,k,A )$, i.e., Eq. \eqref{a111}.

\smallskip\noindent{\it Step 1 : $(\underline{Y}^{n})_{n\geq0}$ is increasing with upper bound.}

\smallskip\noindent Let $(\overline{Y}^0,\overline{Z}^0)\in {\cal E}_{m}\left(0,T \right)$ be the unique solution of
\begin{eqnarray}\label{110}
\overline{Y}_t^{0}&=&\xi+\int_{t}^{T}\left(f_s+K|\overline{Y}_s^{0}|+K\|\overline{Z}_s^{0}\|\right)ds
+\int_{t}^{T}\left(k_s+K|\overline{Y}_s^{0}|\right)dA_s
+\int_{t}^{T}g(s,\overline{Y}_s^{0})\overleftarrow{dB}_{s}\notag\\ &&-
\sum_{i=1}^{m}\int_{t}^{T}\overline{Z}^{0(i)}_{s}dH^{(i)}_{s}.
\end{eqnarray}

\smallskip\noindent
Then, we have for any\ $n\geq 0$,
\begin{eqnarray}\label{cr}
\underline{Y}_t^{0}\leq \underline{Y}_t^{n}\leq \underline{Y}_t^{n+1}\leq \overline{Y}_t^{0},\, \, \, \P\mbox{-a.s}. \ \  \forall\ t\in [0,T].
\end{eqnarray}
Indeed, for $n\geq0$, let $$(Y^{n+1,n},Z^{n+1,n})=(\underline{Y}_t^{n+1}-\underline{Y}_t^{n},\underline{Z}_t^{n+1}-\underline{Z}_t^{n}).$$ 
Then, the processes $(Y^{n+1,n},Z^{n+1,n})$ satisfy the following GBDSDEL : for all $t\in[0,T]$,
\begin{eqnarray*}\label{}
Y_t^{n+1,n}&=&\int_{t}^{T}\left(h(Y_s^{n+1,n},Z_s^{n+1,n})+
\phi^n(s)\right)ds+\int_{t}^{T}\left(p(Y_s^{n+1,n})+
\psi^n(s)\right)dA_s\notag\\&&+\int_{t}^{T}g^{n}(s,Y_s^{n+1,n})\overleftarrow{dB}_{s}-
\sum_{i=1}^{m}\int_{t}^{T}Z^{(i)n+1,n}_{s}dH^{(i)}_{s},
\end{eqnarray*}
where, for all $n\geq 0$,\, the processes  $\phi^n$ and
$\psi^n$, and the function $g^n$ are given by :
\begin{eqnarray*}
g^n(s,y)&=&g(s,y+\underline{Y}_{s}^n)-g(s,\underline{Y}_{s}^n)
\end{eqnarray*}
and for $n=0$
$$\left\{
  \begin{array}{ll}
    \phi^0(s)=f(s,\underline{Y}_{s}^{0},\underline{Z}_{s}^{0})
+K(|\underline{Y}_{s}^{0}|+\|\underline{Z}_{s}^{0}\|)+f_s, & \hbox{} \\
\\
   \psi^0(s)=k(s,\underline{Y}_{s}^{0})
+K|\underline{Y}_{s}^{0}|+k_s, & \hbox{}
  \end{array}
\right.$$
for $n\geq1$
$$\left\{
  \begin{array}{ll}
    \phi^n(s)=f(s,\underline{Y}_{s}^{n},\underline{Z}_{s}^{n})-f(s,\underline{Y}_{s}^{n-1},
\underline{Z}_{s}^{n-1})-h(\underline{Y}_s^n-\underline{Y}_s^{n-1},\underline{Z}_s^n-\underline{Z}_s^{n-1}), & \hbox{} \\ \\
    \psi^n(s)=k(s,\underline{Y}_{s}^{n})-k(s,\underline{Y}_{s}^{n-1})-p(\underline{Y}_s^n-\underline{Y}_s^{n-1}). & \hbox{}
  \end{array}
\right.$$
For $n=0$, since $(\underline{Y}^{0},\underline{Z}^{0})\in \mathcal{E}_{m}(0,T)$, and from  {\bf (C2) (vi)},
it follows that,

$\phi^0(t)\geq0$, and $\psi^0(t)\geq0$, for any $t\in [0,T]$, and
${\E}\left(\displaystyle\int_{0}^{T }\left|
\phi^0(s)\right|^{2}ds+\int_{0}^{T }\left|
\psi^0(s)\right| ^{2}dA_s\right) <+\infty$. \\ Moreover, by its definition, $g^0$ satisfies {\bf (C2) (ii)} and {\bf (iv)}.
Therefore, from Lemma \ref{l1}, we get $Y_t^{1,0}\geq 0$ a.s., i.e., $\underline{Y}_t^{0}\leq \underline{Y}_t^{1}$, a.s., for all $t\in [0,T]$.

\smallskip
\noindent Now, suppose that there exists $n\geq 1$, such that $\underline{Y}_t^{n-1}\leq \underline{Y}_t^{n}$. 
For such $n$, and since $(\underline{Y}^{n-1},\underline{Z}^{n-1}), (\underline{Y}^{n},\underline{Z}^{n})\in \mathcal{E}_{m}(0,T)$, one can show, by their definitions, that, $\phi^n$, $\psi^n$ and $g^n$ satisfy all assumptions of Lemma \ref{l1}. Hence, by Lemma \ref{l1}, we have $Y^{n+1,n}\geq 0$ a.s., i.e. $\underline{Y}_t^{n}\leq \underline{Y}_t^{n+1}$, a.s., for all $t\in [0,T]$.
%

\smallskip
\noindent Now, let show that for all $n\geq 0,$ \; $\underline{Y}_t^{n}\leq \overline{Y}_t^{0}$, a.s., for all $t\in [0,T]$.

\smallskip
\noindent First, for $n=0$, let
$$(\tilde{Y}_t^{0,0},\tilde{Z}_t^{0,0})=(\overline{Y}_t^{0}-\underline{Y}_t^{0}, \overline{Z}_t^{0}-\underline{Z}_t^{0}).$$
Then, it follows from \eqref{11a} and \eqref{110} that for all $t\in[0,T]$,
\begin{eqnarray*}\label{}
\tilde{Y}_t^{0,0}&=&
\int_{t}^{T}f_s^{0}ds
+\int_{t}^{T}k_s^{0}dA_s\notag+\int_{t}^{T}g^{0}(s,\tilde{Y}_s^{0,0})\overleftarrow{dB}_{s}-
\sum_{i=1}^{m}\int_{t}^{T}\tilde{Z}^{(i)0,0}_{s}dH^{(i)}_{s},
\end{eqnarray*}
where, 
$$\left\{
   \begin{array}{ll}
     f_s^{0}=K\left(|\overline{Y}_s^{0}|+\|\overline{Z}_s^{0}\|+|\underline{Y}_s^{0}|+\|\underline{Z}_s^{0}\|\right)\geq0, & \hbox{} \\
     k_s^{0}=K\left(|\overline{Y}_s^{0}|+|\underline{Y}_s^{0}|\right)\geq0. & \hbox{}
   \end{array}
 \right.$$
\noindent 
Moreover, since, $(\overline{Y}^0,\overline{Z}^0),(\underline{Y}^0,\underline{Z}^0)\in {\cal E}_{m}\left(0,T \right)$, \;${\E}\left(\displaystyle\int_{0}^{T }\left|f_s^{0}\right| ^{2}ds+\int_{0}^{T }\left|k_s^{0}\right| ^{2}dA_s\right) <+\infty$.\\ Therefore, by Lemma \eqref{l1}, we have $\tilde{Y}_t^{0,0}\geq 0$ a.s., i.e. $\underline{Y}_t^{0}\leq \overline{Y}_t^{0}$, a.s., for all $t\in [0,T]$.
\\ Now, suppose that there exists $n\geq 0$, such that  $\underline{Y}_t^{n}\leq \overline{Y}_t^{0}$, and let us show that, $\underline{Y}_t^{n+1}\leq \overline{Y}_t^{0}$.

\noindent Since $\overline{Y}_t^{0}\geq\underline{Y}_t^{n}$, it follows from assumption $\textbf{(C2) (vi)}$ that
$$
f(t,\overline{Y}_t^0,
\overline{Z}_t^0)\geq f(t,\underline{Y}_{t}^{n},\underline{Z}_{t}^{n})+ h(\overline{Y}_t^0-\underline{Y}_t^{n},
\overline{Z}_t^0-\underline{Z}_t^{n})=\underline{F}_{n+1}(t,\overline{Y}_t^0,
\overline{Z}_t^0)
$$
and
$$
k(t,\overline{Y}_t^0)\geq k(t,\underline{Y}_{t}^{n})+ p(\overline{Y}_t^0-\underline{Y}_t^{n}) =\underline{K}_{n+1}(t,\overline{Y}_t^0).
$$
Hence, \; $f_t+ K\left(|\overline{Y}_t^{0}|+\|\overline{Z}_t^{0}\|\right)\geq \underline{F}_{n+1}(t,\overline{Y}_t^0,
\overline{Z}_t^0)\text{\; and\; } k_t+ K|\overline{Y}_t^{0}|\geq \underline{K}_{n+1}(t,\overline{Y}_t^0).$

\smallskip
\noindent
Moreover, since $\underline{F}_{n+1}(t,y,
z)=f(t,\underline{Y}_t^{n},\underline{Z}_t^{n})+ h(y-\underline{Y}_t^{n},
z-\underline{Z}_t^{n})$, and $h$ satisfies inequality \eqref{ad3}, then $\underline{F}_{n+1}$ satisfies inequality \eqref{adprop}. Consequently, from Theorem \ref{tc1}, \; $\overline{Y}_t^{0}\geq\underline{Y}_t^{n+1}$. Hence, for all $n\geq 0,$ \; $\underline{Y}_t^{n}\leq \overline{Y}_t^{0}$, a.s., for all $t\in [0,T]$.

\smallskip\noindent{\it Step 2 : A priori estimates}

\smallskip\noindent There exists a constant $C>0$ independent of $n$ such that
\begin{eqnarray}\label{esti}
&&\underset{n\geq0}{\sup}\left(\mathbb{E}\left(%
\underset{0\leq t\leq T}{\sup} |\underline{Y}_t^{n}|^{2}
+\int_{0}^{T}|\underline{Y}_s^{n}|^{2} (ds+ dA_s)+\int_{0}^{T}\|\underline{Z}_s^{n}\|^{2} ds\right)\right)
\leq C.
\end{eqnarray}
Indeed, first by virtue of \eqref{cr}, we derive that 
\begin{eqnarray}\label{A0}
&&\underset{n\geq0}{\sup}\left(\mathbb{E}\left(%
\underset{0\leq t\leq T}{\sup} |\underline{Y}_t^{n}|^{2}
+\int_{0}^{T}|\underline{Y}_s^{n}|^{2} (ds+ dA_s)\right)\right)\notag\\
&\leq&
\mathbb{E}\left(%
\underset{0\leq t\leq T}{\sup} \left(|\underline{Y}_t^{0}|^{2}+|\overline{Y}_t^{0}|^{2}\right)
+\int_{0}^{T}\left(|\underline{Y}_s^{0}|^{2}+|\overline{Y}_s^{0}|^{2} \right)(ds+ dA_s)\right).
\end{eqnarray}

\smallskip\noindent
On the other hand, applying Itô's formula to $e^{\mu t+\lambda A_t} \left|\underline{Y}_{t}^{n}\right| ^{2}$, for any $\mu,\lambda>0$, we have
\begin{eqnarray}
&&e^{\mu t+\lambda A_t}|\underline{Y}_{t}^n|^{2}+\lambda\int_{t}^{T}e^{\mu s+\lambda A_s}|\underline{Y}_{s}^{n}|^{2}dA_s
+\mu\int_{t}^{T}e^{\mu s+\lambda A_s}|\underline{Y}_{s}^{n}|^{2}ds+\int_{0}^{T
}e^{\mu s+\lambda A_s}\left\|\underline{Z}_{s}^{n}\right\| ^{2}ds\nonumber\\
&&=e^{\mu T+\lambda A_T}|\xi|^{2}
+2\int_{t}^{T}e^{\mu s+\lambda A_s}\underline{Y}_{s}^{n}\underline{F}_n(s,\underline{Y}_{s}^{n},\underline{Z}_{s}^{n})ds+
2\int_{t}^{T}e^{\mu s+\lambda A_s}\underline{Y}_{s}^{n}g(s,\underline{Y}_{s}^{n})\overleftarrow{dB}_{s} \\
&&+2\int_{t}^{T}e^{\mu s+\lambda A_s}\underline{Y}_{s}^{n}\underline{K}_n(s,\underline{Y}_{s}^{n})dA_s-
2\sum_{i=1}^{m}\int_{t}^{T}e^{\mu s+\lambda A_s}\underline{Y}_{s}^{n}\underline{Z}_{s}^{n(i)}dH_{s}^{(i)}+
\int_{t}^{T}e^{\mu s+\lambda A_s}|g(s,\underline{Y}_{s}^{n})|^{2}ds\nonumber\\&&
-\sum_{i,j=1}^{m}\int_{t}^{T}e^{\mu s+\lambda A_s}\underline{Z}_{s}^{n(i)}\underline{Z}_{s}^{n(j)}
d\left([H^{(i)},H^{(j)}]_s-\delta_{ij}s\right).\nonumber\label{est}
\end{eqnarray}
From, ${\bf (C2) (ii), (iv), (vi)}$ together with \eqref{F} and Young's inequality, we have for any $\sigma>0$ and $\gamma>0$,
\begin{eqnarray*}
&&2\left\langle \underline{Y}_{s}^{n},\underline{F}_{n}(s,\underline{Y}_s^{n},\underline{Z}_s^{n})\right\rangle\\
&\leq&2|\underline{Y}_{s}^{n}|\Big(f_s+
2K|\underline{Y}_{s}^{n-1}|+2K\|\underline{Z}_{s}^{n-1}\|+K|\underline{Y}_{s}^{n}|+K\|\underline{Z}_{s}^{n}\|\Big)
\\&\leq&
\left(1+4K+\frac{4K^{2}}{\gamma}+\frac{K^{2}}{\sigma}\right)\left|
\underline{Y}_{s}^{n}\right|^2+2K\left|
\underline{Y}_{s}^{n-1}\right|^2+\sigma\left\| \underline{Z}_{s}^{n}\right\|^2+\gamma\left\| \underline{Z}_{s}^{n-1}\right\|^2+f_s^{2},
\end{eqnarray*}
\begin{eqnarray*}
2\left\langle \underline{Y}_{s}^{n},\underline{K}_{n}(s,\underline{Y}_s^{n})\right\rangle
&\leq&2|\underline{Y}_{s}^{n}|\Big(k_s+
2K|\underline{Y}_{s}^{n-1}|+K|\underline{Y}_{s}^{n}|\Big)
\\&\leq&
\left(1+4K\right)\left|
\underline{Y}_{s}^{n}\right|^2+2K\left|
\underline{Y}_{s}^{n-1}\right|^2+k_s^{2}
\end{eqnarray*}
and
\begin{eqnarray*}
\left|g(s,\underline{Y}_s^{n})\right|^2&\leq&
K\left|
\underline{Y}_{s}^{n^{}}\right|^2.
\end{eqnarray*}
Therefore taking expectation in both side of $\eqref{est}$ with the suitable $\lambda$ and $\mu$,
and by virtue of \eqref{A0} and assumptions {\bf (C1)}, {\bf (C2) (iii)}, there exists a constant $c>0$
independent of $n$ such that for any $\gamma>0$, $\sigma>0$, we derive, for any $n\geq1$, 
\begin{eqnarray*}
{\E}\left(\int_{0}^{T
}e^{\mu s+\lambda A_s}\left\|\underline{Z}_{s}^{n}\right\| ^{2}ds\right)\notag
&\leq&c+\sigma{\E}\left(\int_{0}^{T
}e^{\mu s+\lambda A_s}\left\|\underline{Z}_{s}^{n}\right\| ^{2}ds\right)+\gamma{\E}\left(\int_{0}^{T
}e^{\mu s+\lambda A_s}\left\|\underline{Z}_{s}^{n-1}\right\| ^{2}ds\right).
\end{eqnarray*}
Consequently, choosing $\sigma<1$, we obtain, for any
$\gamma>0$, $n\geq1$,
\begin{eqnarray*}
{\E}\left(\int_{0}^{T
}e^{\mu s+\lambda A_s}\left\|\underline{Z}_{s}^{n}\right\| ^{2}ds\right)\notag
&\leq&\frac{c}{1-\sigma}+\frac{\gamma}{1-\sigma}{\E}\left(\int_{0}^{T
}e^{\mu s+\lambda A_s}\left\|\underline{Z}_{s}^{n-1}\right\| ^{2}ds\right)
\end{eqnarray*}
which, by iteration, provides, for any $n\geq1$,
\begin{eqnarray*}
{\E}\left(\int_{0}^{T
}e^{\mu s+\lambda A_s}\left\|\underline{Z}_{s}^{n}\right\| ^{2}ds\right)\notag
&\leq&\frac{c}{1-\sigma}\sum_{k=0}^{n-1}\left(\frac{\gamma}{1-\sigma}\right)^{k}
+\left(\frac{\gamma}{1-\sigma}\right)^{n}{\E}\left(\int_{0}^{T
}e^{\mu s+\lambda A_s}\left\|\underline{Z}_{s}^{0}\right\| ^{2}ds\right).
\end{eqnarray*}
Now, choosing $\gamma>0$, such that, $0<\frac{\gamma}{1-\sigma}<1$
and since $\|Z^{0}\|_{\mathcal{M}^{2}(\R^{m})}^{2}<+\infty$, 
we get the existence of a constant $C>0$ independent of $n$ such that
\begin{eqnarray*}
\underset{n\geq0}{\sup}
\left(\E\int_{0}^{T}\|\underline{Z}_s^{n}\|^2ds\right)<C.
\end{eqnarray*}
Therefore, setting $F_{s}^{n}=\underline{F}_{n}(s,\underline{Y}_{s}^{n},\underline{Z}_s^{n})$ and
$K_{s}^{n}=\underline{K}_{n}(s,\underline{Y}_{s}^{n})$,
it follows from \eqref{F} and \eqref{A0} 
that
\begin{eqnarray*}
\underset{n\geq0}{\sup}\
\left(\E\left(\int_{0}^{T}|F_{s}^{n}|^2ds+
\int_{0}^{T}|K_{s}^{n}|^2dA_s\right)\right)<+\infty.
\end{eqnarray*}

\smallskip\noindent{\it Step 3 : Convergence result}

\smallskip\noindent From \eqref{cr}  and \eqref{A0},  $(\underline{Y}^{n})_{n\geq0}$ is an increasing and bounded sequence in $\mathcal{S}^{2}(0,T)
 \cap\mathcal{A}^{2}(0,T)$. 
 Then, there exists a process $\underline{Y}$ such that $\underline{Y}_{t}^n\; \nearrow\; \underline{Y}_t\; a.s.$, for all $t\in[0, T]$. Therefore, it follows from Fatou's lemma together with the dominated convergence
theorem that
\begin{eqnarray*}\label{}
\mathbb{E}\left(%
 |\underline{Y}_t^{}|^{2}
+\int_{0}^{T}|\underline{Y}_s^{}|^{2}ds+\int_{0}^{T}|\underline{Y}_s^{}|^{2}dA_s\right)
\notag
&\leq&C
\end{eqnarray*}
and
\begin{eqnarray}\label{step2}
\displaystyle\lim_{n\rightarrow+\infty}{\E}\left(%
 |\underline{Y}_{s}^{n}-\underline{Y}_t^{}|^{2}
+\int_{0}^{T}|\underline{Y}_{s}^{n}-\underline{Y}_s^{}|^{2} ds+\int_{0}^{T}|\underline{Y}_{s}^{n}-\underline{Y}_s^{}|^{2}dA_s\right)
=0.
\end{eqnarray}
Moreover, applying again Itô's formula to $|\underline{Y}_{t}^{p}-\underline{Y}_{t}^{n}|^2$, we have
\begin{eqnarray}\label{itc}
&&\left|\underline{Y}_{t}^{p}- \underline{Y}_{t}^{n}\right| ^{2}+\int_{t}^{T
}\left\|\underline{Z}_{s}^{p}-\underline{Z}_{s}^{n}\right\| ^{2}ds
\notag\\&=&2\int_{t}^{T }\left\langle \underline{Y}_{s}^{p}-\underline{Y}_{s}^{n},F_{s}^{p}-F_{s}^{n}\right\rangle ds
+2\int_{t}^{T }\left\langle \underline{Y}_{s}^{p}-\underline{Y}_{s}^{n},K_{s}^{p}-K_{s}^{n}\right\rangle dA_s\notag\\
&&+\int_{t}^{T }\left| g(s,\underline{Y}_s^{p})-g(s,\underline{Y}_s^{n})\right|^{2}ds+M_T^{p,n}-M_t^{p,n}+N_t^{p,n}+X_T^{p,n}-X_t^{p,n},
\end{eqnarray}
where, $M$, $N$ and $X$are uniformly integrable martingale defined by :
$$\left\{
  \begin{array}{lll}
    M_t^{p,n}&=&\displaystyle
-2\sum_{i=1}^{m}\int_{0}^{t}\langle \underline{Y}_{s}^{p}-
\underline{Y}_{s}^{n},\underline{Z}_{s}^{p(i)}-\underline{Z}_{s}^{n(i)}\rangle dH_{s}^{(i)}
\\
    N_t^{p,n}&=&\displaystyle2\int_{t}^{T }\langle \underline{Y}_{s}^{p}-\underline{Y}_{s}^{n},
    g(s,\underline{Y}_s^{p})-g(s,\underline{Y}_s^{n})\rangle\overleftarrow{dB}_{s},
\\X_t^{p,n}&=&\displaystyle-\sum_{i,j=1}^{m}\int_{0}^{t}\langle
\underline{Z}_{s}^{p(i)}-\underline{Z}_{s}^{n(i)},\underline{Z}_{s}^{p(j)}-\underline{Z}_{s}^{n(j)}\rangle
d\left([H^{(i)},H^{(j)}]_s-\delta_{ij}s\right).
  \end{array}\right.
$$
Then, taking the expectation in \eqref{itc}, we deduce by virtue of Hölder's inequality and {\bf (C2) (ii), (iv)}, that
\begin{eqnarray*}
\E\int_{0}^{T
}\left\|
\underline{Z}_{s}^{p}-\underline{Z}_{s}^{n}\right\| ^{2}ds &\leq& C'\left(\E\int_{0}^{T
}\left| \underline{Y}_{s}^{p}-\underline{Y}_{s}^{n}\right| ^{2}ds\right)^{\frac{1}{2}}+C'\left(\E\int_{0}^{T
}\left| \underline{Y}_{s}^{p}-\underline{Y}_{s}^{n}\right| ^{2}dA_s\right)^{\frac{1}{2}}\\
&&+K\E\int_{0}^{T
}\left| \underline{Y}_{s}^{p}-\underline{Y}_{s}^{n}\right| ^{2}ds.\nonumber
\end{eqnarray*}
Consequently, it follows from \eqref{step2}, that $(\underline{Z}_n)_{n\geq0}$ is a Cauchy sequence in
$ \mathcal{M}^{2}(0,T,{\R}^{m})$.
Then, there exists a $\mathcal{F}_{t}-$jointly measurable process $\underline{Z}\in  \mathcal{M}^{2}(0,T,{\R}^{m})$ such that
\begin{eqnarray*}
\lim_{n\rightarrow+\infty}{\E}\int_{0}^{T }\left\|\underline{Z}_s^{n}-\underline{Z}_s^{}\right\|^2 ds=0.
\end{eqnarray*}
On the other hand, taking the supremum and the expectation in \eqref{itc}, we deduce from Burkhölder-Davis-Gundy inequality, that
\begin{eqnarray*}
&&{\E}\left(\sup_{0\leq t\leq T}\left| \underline{Y}_{t}^{p}-\underline{Y}_{t}^{n}\right| ^{2}\right)
\\&\leq& C\left(\left(\E\int_{0}^{T}|\underline{Y}_{s}^{p}-\underline{Y}_{s}^{n}|^{2}ds\right)^{\frac{1}{2}}
+\left(\E\int_{0}^{T
}\left| \underline{Y}_{s}^{p}-\underline{Y}_{s}^{n}\right| ^{2}dA_s\right)^{\frac{1}{2}}+\E\int_{0}^{T}|\underline{Y}_{s}^{p}-\underline{Y}_{s}^{n}|^{2}ds\right),
\end{eqnarray*}
from which, together with \eqref{step2}, we deduce that $\P-$almost surely, $\underline{Y}^{n}$ converges uniformly to $\underline{Y}$ which is continuous, such that\  $\displaystyle\E\left(\sup_{0\leq t\leq T}|\underline{Y}_{t}|^2\right)\leq C$. And then,
\begin{eqnarray*}\label{}
\|(\underline{Y},\underline{Z})\|_{\mathcal{E}_{m}}^{2}=\mathbb{E}\left(%
 \sup_{0\leq t\leq T}|\underline{Y}_t^{}|^{2}
+\int_{0}^{T}|\underline{Y}_s^{}|^{2}dA_s+\int_{0}^{T}\|\underline{Z}_s^{}\|^{2}ds\right)\notag&\leq&C.
\end{eqnarray*}

\smallskip\noindent{\it Step 4 : $(\underline{Y},\underline{Z})$ verifies GBDSDEL $\eqref{a111}$}

\smallskip\noindent
Since $(\underline{Y}^n,\underline{Z}^n)\rightarrow (\underline{Y},\underline{Z})$ in $ \mathcal{E}_{m}(0,T )$, along a subsequence which we still denote $(\underline{Y}^n,\underline{Z}^n)$, we get $$(\underline{Y}^n,\underline{Z}^n)\longrightarrow (\underline{Y},\underline{Z}),\; \; \; dt\otimes d\P\;\; \mbox{a.e.,}$$ and there exists, $ \mathcal{M}^{2}(0,T,{\R}^{})$ such that, for all $n\geq1$, $|\underline{Z}^n|<\Pi,\; \; \; dt\otimes d\P\;\; \mbox{a.e.}$.
\\
Therefore, from {\bf (C2) (ii), (v), (vi)}, we have
\begin{eqnarray*}
\underline{F}_n(t,\underline{Y}_t^{n},\underline{Z}_t^{n})=f(t,\underline{Y}_t^{n-1},\underline{Z}_t^{n-1})
+h(t,\underline{Y}_t^{n}-\underline{Y}_t^{n-1},\underline{Z}_t^{n}-\underline{Z}_t^{n-1})
\longrightarrow f(t,\underline{Y}_{t},\underline{Z}_{t}),\; dt\otimes d\P\; a.e.,
\end{eqnarray*}
and
\begin{eqnarray*}
\underline{K}_n(t,\underline{Y}_t^{n})=k(t,\underline{Y}_t^{n-1})
+p(\underline{Y}_t^{n}-\underline{Y}_t^{n-1})
\longrightarrow k(t,\underline{Y}_{t}),\; dt\otimes d\P\; a.e.,
\end{eqnarray*}
for all $t\in[0,T]$ as $n\rightarrow +\infty$.

\medskip\noindent
Moreover from \eqref{F} and \eqref{cr}, we have
\begin{eqnarray*}
|\underline{F}_{n}(t,\underline{Y}_{t}^{n},\underline{Z}_{t}^{n})|\leq \Psi(t),\; \; \; dt\otimes d\P\;\; \mbox{a.e.},
\end{eqnarray*}
where, $\Psi(t)=f_t+2K(|\underline{Y}_{t}^{0}|+|\overline{Y}_{t}^{0}|)+2|\Pi_{t}|$, which,
from {\bf (C2) (iii)} and Cauchy-Schwarz inequality, yields, \; $\displaystyle{\E}\int_{0}^{T} \Psi(s)ds<+\infty.$
Then, it follows by the dominated convergence theorem that
  \begin{eqnarray*}
\E\int_{t}^{T}|\underline{F}_{n}(s,\underline{Y}_s^{n},\underline{Z}_s^{n})
- f(s,\underline{Y}_{s},\underline{Z}_{s})|ds\longrightarrow 0,\; \; \; \text{as}\; \; \; n \rightarrow +\infty.
\end{eqnarray*}
Also, using the same argument, we get
 \begin{eqnarray*}
\E\int_{t}^{T}|\underline{K}_{n}(s,\underline{Y}_s^{n})
- k(s,\underline{Y}_{s})|dA_s\longrightarrow 0,\; \; \; \text{as}\; \; \; n \rightarrow +\infty.
\end{eqnarray*}
On the other hand, thanks to {\bf (C2) (ii), (iv)} and BDG inequality, we know that there exists a positive constant $C>0$ independent of $n$ such that,
\begin{eqnarray*}
\E\left(\underset{0\leq t\leq T}{\sup}\left|\int_{t}^{T}g(s,\underline{Y}_{s}^{n})\overleftarrow{dB}_s- \int_{t}^{T}g(s,\underline{Y}_{s})\overleftarrow{dB}_s \right|^{2}\right)
\leq C\E\int_{0}^{T}\left|\underline{Y}_{s}^{n}-\underline{Y}_{s}\right|^2ds+\underset{n \rightarrow
\infty}{\longrightarrow} 0
\end{eqnarray*}
and
\begin{eqnarray*}
\E\left(\underset{0\leq t\leq T}{\sup}\left|\sum_{i=1}^{m}\int_{t}^{T}\underline{Z}^{n(i)}_{s}dH^{(i)}_{s}-
\sum_{i=1}^{m}\int_{t}^{T}\underline{Z}^{(i)}_{s}dH^{(i)}_{s} \right|^{2}\right)\leq C\E\int_{0}^{T}e^{\beta A(s)}|\underline{Z}_{s}^{n}- \underline{Z}_{s}|^2ds\underset{n \rightarrow
\infty}{\longrightarrow} 0.
\end{eqnarray*}
Finally, passing to the limit on both sides of GBDSDEL $(\xi,\underline{F}_{n},g,\underline{K}_{n}, A)$ \eqref{11}, 
we get that $(\underline{Y},\underline{Z})$ is a solution of GBDSDEL$(\xi,f,g,k,A)$ \eqref{a111}.

\smallskip\noindent{\it Step 5 : $(\underline{Y},\underline{Z})$ is the minimal  solution of GBDSDEL $\eqref{a111}$.}

\smallskip\noindent
Let $(Y,Z) \in {\cal E}_{m}\left(0,T \right)$ be any solution of GBDSDEL$(\xi,f,g,k,A)$ \eqref{a111}
and consider GBDSDEL $(\xi,\underline{F}_{n},g,\underline{K}_{n}, A)$ \eqref{11}
with its minimal solution $(\underline{Y}^{n},\underline{Z}^{n}) \in {\cal E}_{m}\left(0,T \right)$, for each $n\geq0$.

\smallskip\indent
For $n=0$, let $( \tilde{Y}_{t},\tilde{Z}_{t} )=( Y_{t}-\underline{Y}_{t}^{0},Z_{t}-\underline{Z}_{t}^{0} )$. Then,
\begin{eqnarray}
\tilde{Y}_{t}&=&\int_{t}^{T}\tilde{f}(s,\tilde{Y}_{s^-},\tilde{Z}_{s})ds+\int_{t
}^{T}\tilde{k}(s,\tilde{Y}_{s^-})dA_s+\int_{t}^{T}\tilde{g}(s,\tilde{Y}_{s^-})\,d\overleftarrow{B}_{s}\nonumber\\
&&-\sum_{i=1}^{m}\int_{t}^{T}\tilde{Z}^{(i)}_{s}dH^{(i)}_{s},\,\ 0\leq t\leq
T,\label{fi}
\end{eqnarray} where, $\tilde{f}$, $\tilde{k}$ and $\tilde{g}$  are defined by :
$$\left\{
  \begin{array}{ll}
   \tilde{f}(s,y,z)=f(s,Y_{s},z+\underline{Z}_{s}^{0})
+K|y-Y_{s}|+K|Z_{s}^{0}|+f_s;  & \hbox{} \\
\\
   \tilde{k}(s,y)=k(s,Y_{s})+K|y-Y_{s}|+k_s; & \hbox{} \\
\\
   \tilde{g}(s,y)=g(s,y+\underline{Y}_{s}^{0})-g(s,\underline{Y}_{s}^{0}), & \hbox{}
  \end{array}
\right.$$

\smallskip\noindent
with $\tilde{f}(s,0,0)=f(s,Y_{s},\underline{Z}_{s}^{0})+K|Y_{s}|+K|Z_{s}^{0}|+f_s$ and
$\tilde{k}(s,0)=k(s,Y_{s})+K|Y_{s}|+k_s$.

\smallskip\noindent By their definitions, it is easy to check that,
$$|\tilde{f}(t,y,z)-\tilde{f}(t,0,0)| \leq K(|y|+\|z\|) \; \text{\; and \;} \;|\tilde{k}(t,y)-\tilde{k}(t,0)| \leq K|y|.$$
On the other hand, from {\bf (C2)(ii)}, we have
$$ f(s,Y_{s},\underline{Z}_{s}^{0})\geq -K|Y_{s}|-K|Z_{s}^{0}|-f_s \; \text{\; and \;} \; k(s,Y_{s})\geq -K|Y_{s}|-k_s,$$
this implies that $ \tilde{f}(s,0,0)\geq0$  and  $\tilde{k}(s,0)\geq0$.

\smallskip\noindent Moreover, since $f$ is Lipschitz in $z$ and satisfies inequality \eqref{ad2},
$\tilde{f}$ is a Lipschitz function in $(y,z)$ and satisfies inequality \eqref{ad3}.
Therefore, by Lemma \ref{l1} and Remark \ref{rk},
it follows that $\tilde{Y}_t\geq 0,$ i.e., $Y_t\geq \underline{Y}_t^{0}, \; \text{a.s., for all}\;  t\in [0,T].
$

\smallskip\indent
Now, suppose that there exists $n\geq 1$ such that  $Y_t\geq \underline{Y}_t^{n}$ and let prove that  $Y_t\geq \underline{Y}_t^{n+1}$. \\
From {\bf (C2) (vi)} and since $Y_t\geq \underline{Y}_t^{n}$, it follows that
$$ f(s,Y_s,Z_s)\geq f(s,\underline{Y}_s^{n},\underline{Z}_s^{n})+h(Y_s-\underline{Y}_s^{n},Z_s-\underline{Z}_s^{n})= \underline{F}_{n+1}(s,Y_{s},Z_{s})$$
and 
$$ k(s,Y_s)\geq k(s,\underline{Y}_s^{n})+
p(Y_s-\underline{Y}_s^{n})= \underline{K}_{n+1}(s,Y_{s}).$$
Since, $\underline{F}_{n+1}$  and  $\underline{K}_{n+1}$ are continuous with linear growth, 
and inequality \eqref{ad2} still holds, we get from Theorem \ref{l0a2} that,
$Y_t\geq \underline{Y}_t^{n+1}, \; \text{a.s., for all}\;  t\in [0,T].$
Consequently, for all $n\geq0$, we have\;
$Y_t\geq \underline{Y}_t^{n}, \; \text{a.s., for all}\;  t\in [0,T].$
Since $(\underline{Y}^n,\underline{Z}^n)$ converges to $(\underline{Y},\underline{Z})$, we get\;
$Y_t\geq \underline{Y}_t, \; \text{a.s., for all}\;  t\in [0,T].$
That proves that $(\underline{Y},\underline{Z})$ is the minimal solution of GBDSDEL$(\xi,f,g,k,A)$ \eqref{a111}.
\end{proof}

\begin{remark}
We can prove the maximal solution result for GBDSDEL \eqref{a111} when the coefficient $f$  and $k$ satisfy the following:
(i.e., when {\bf(v)} and {\bf (vi)} in {\bf(C2)} are replaced by the following:)
\begin{description}
\item
\textbf{(v')}\; for all $(t,z)\in[0,T]\times {\R}^{m}$, $y\mapsto f(t,y,z)$ and $y\mapsto k(t,y)$ are right-continuous, for all $\omega$ a.e.

\item\textbf{(vi')}\; there exists two functions
$h:\mathbb{R} \times \mathbb{R}^{m}\rightarrow \mathbb{R}$ and $p:\mathbb{R} \rightarrow \mathbb{R}$, satisfying, for all $y\in \mathbb{R}$, $z, z^{\prime} \in \mathbb{R}^{
 m}$  :
$$\left\{
  \begin{array}{ll}
  |h(y,z)| \leq K(|y|+\|z\|) \hbox{\, and\,}   |p(y)| \leq K|y|, & \hbox{} \\
 |h(y,z)-h(y,z^{\prime})|\leq K\|z-z^{\prime}\|, & \hbox{} \\
 y\mapsto h(y,z)\hbox{\, and\, } y\mapsto p(y) \; \hbox{are continuous,} & \hbox{}
  \end{array}
\right.$$
such that for all $y\geq y^{\prime}$, $z, z^{\prime} \in \mathbb{R}^{
 m}$, $t\in[0,T]$, we have
$$f(t,y,z)-f(t,y^{\prime},z^{\prime}) \leq
h(y-y^{\prime},z-z^{\prime})\hbox{\, and\, } k(t,y)-k(t,y^{\prime}) \leq
p(y-y^{\prime}).$$
\end{description}
\end{remark}
\noindent
To this end, we need the following approximation.

Firstly, we consider the solution $(\overline{Y}^0,\overline{Z}^0)\in {\cal E}_{m}\left(0,T \right)$  of
GBDSDEL $(\xi,\overline{F}_{0},g,\overline{K}_{0}, A)$, where, \\$\overline{F}_{0}(t,y,z)=K|y|+K|z|+f_t$  and
$\overline{K}_{0}(t,y)=K|y|+k_t$, and then define recursively a sequence
$\left\{(\overline{Y}^n,\overline{Z}^n)\right\}_{n\geq1}$
in ${\cal E}_{m}\left(0,T \right)$ by : for all $t\in[0,T]$,
%
\begin{eqnarray}\label{eqn}
\overline{Y}_t^n&=&\xi+\int_{t}^{T}\left(f(s,\overline{Y}_s^{n-1},\overline{Z}_s^{n-1})
+h(\overline{Y}_s^n-\overline{Y}_s^{n-1},\overline{Z}_s^n-\overline{Z}_s^{n-1})\right)ds
\\ \notag &&+\int_{t}^{T}\left(k(s,\overline{Y}_s^{n-1})
+p(\overline{Y}_s^n-\overline{Y}_s^{n-1})\right)dA_s
+\int_{t}^{T}g(s,\overline{Y}_s^{n})\overleftarrow{dB}_{s}-\sum_{i=1}^{m}\int_{t}^{T}\overline{Z}_s^{n(i)}dH_{s}^{i}.
\end{eqnarray}
For $n\geq1$ and $(\overline{Y}^{n-1},\overline{Z}^{n-1})\in{\cal E}_{m}\left(0,T \right)$, let, for any $t \in[0,T]$, $(y, z) \in \mathbb{R}
\times \mathbb{R}^{ m}$,
$$\overline{F}_{n}(t,y,z)=f(t,\overline{Y}_t^{n-1},\overline{Z}_t^{n-1})+h(y-\overline{Y}_t^{n-1},z-\overline{Z}_t^{n-1}).$$
$$\overline{K}_{n}(t,y)=k(t,\overline{Y}_t^{n-1})+p(y-\overline{Y}_t^{n-1}).$$
Then, using similar calculations, we deduce by Theorem \ref{te} that
GBDSDEL $(\xi,\overline{F}_{n},g,\overline{K}_{n}, A)$, i.e., Eq. \eqref{eqn} has a minimal (resp. maximal) solution in ${\cal E}_{m}\left(0,T \right)$. Here, we consider the maximal solution, which we steal denote by $(\overline{Y}^{n},\overline{Z}^{n})$, for each $n\geq1$.
By similar procedures, we get the following result:

\begin{proposition}
Assume {\bf (C1)} and {\bf (C2) (i)-(iv), (v'), (vi')}. Then
\begin{description}
  \item[(i)] for all\; $n\geq0$, \;
$\underline{Y}_t^{0}\leq \overline{Y}_t^{n+1}\leq \overline{Y}_t^{n}\leq \overline{Y}_t^{0},\, \, \, \P\mbox{-a.s}. \ \  \forall t\in [0,T],$
  \item[(ii)] the sequence $((\overline{Y}^n,\overline{Z}^n))_{n\geq0}$ converges in $ \mathcal{E}_m(0,T )$ to a limit $ (\overline{Y},\overline{Z})$, which is the maximal solution of GBDSDEL \eqref{a111}.
\end{description}

\end{proposition}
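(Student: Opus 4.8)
The plan is to transcribe, step by step, the five-part proof of Theorem~\ref{te3}, systematically reversing every monotonicity and comparison statement. Two observations make the reversal mechanical. First, the functions $\widehat h(y,z):=-h(-y,-z)$ and $\widehat p(y):=-p(-y)$ are again continuous, of linear growth with the same constant $K$, with $\widehat h$ $K$-Lipschitz in $z$; moreover the change of variables $z\mapsto -z$ shows that the rate of change of $\widehat h(y,\cdot)$ at $(z^1,z^2)$ equals the rate of change of $h(-y,\cdot)$ at $(-z^1,-z^2)$, so $\widehat h$ inherits relation \eqref{ad3} from the hypothesis on $h$ in Lemma~\ref{l1}. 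Second, Lemma~\ref{l1} has an immediate dual: applying it to $(-Y,-Z)$, which solves \eqref{2} with $(h,p,\phi,\psi,\xi,g)$ replaced by $(\widehat h,\widehat p,-\phi,-\psi,-\xi,-g(\cdot,-\cdot))$, one obtains that if $\xi\le 0$, $\phi\le 0$ and $\psi\le 0$, then any solution of \eqref{2} satisfies $Y_t\le 0$ for all $t$. The comparison Theorems~\ref{l0a1} and \ref{l0a2} are likewise used with the roles of the two solutions exchanged.

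For part~(i) I would argue by downward induction, exactly as in Step~1 of Theorem~\ref{te3}. Setting $D^n:=\overline Y^n-\overline Y^{n+1}$, the pair $(D^n,\overline Z^n-\overline Z^{n+1})$ solves a GBDSDEL of the form \eqref{2} with continuous corrector $\widehat h$ (resp.\ $\widehat p$), zero terminal value, driver $g^n(s,y)=g(s,y+\overline Y^{n+1}_s)-g(s,\overline Y^{n+1}_s)$, and forcing terms $\phi^n,\psi^n$ assembled from the difference of two consecutive $f$'s (resp.\ $k$'s) and the $h$- (resp.\ $p$-) increment. The base case $n=0$ is immediate because $\overline F_0,\overline K_0$ are the largest admissible drivers, which forces $\phi^0,\psi^0\ge 0$; the induction step uses the hypothesis $\overline Y^{n}\le\overline Y^{n-1}$ together with the order relations of {\bf(C2)(vi')} to again get $\phi^n,\psi^n\ge 0$, and the dual of Lemma~\ref{l1} then yields $D^n\ge 0$. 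The upper bound $\overline Y^n\le\overline Y^0$ is the telescoped chain $\overline Y^n\le\overline Y^{n-1}\le\dots\le\overline Y^0$, and the lower bound $\underline Y^0\le\overline Y^{n+1}$ is obtained by a parallel induction, comparing $\overline Y^{n+1}$ with $\underline Y^0$ via Theorem~\ref{l0a2} (in the alternative in which the maximal solution $\overline Y^{n+1}$ plays the distinguished role) together with the linear-growth bound on $\overline F_n$.

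For part~(ii), the a priori estimates of Step~2 carry over verbatim: part~(i) squeezes $\overline Y^n$ between $\underline Y^0$ and $\overline Y^0$, so $\sup_n\E\big(\sup_t|\overline Y^n_t|^2+\int_0^T|\overline Y^n_s|^2(ds+dA_s)\big)<\infty$, and It\^o's formula for $e^{\mu t+\lambda A_t}|\overline Y^n_t|^2$ with the linear-growth bounds \eqref{F} and Young's inequality gives a contraction-type recursion for $\E\int_0^Te^{\mu s+\lambda A_s}\|\overline Z^n_s\|^2ds$, hence a uniform $L^2$ bound on $\overline Z^n$. Since $(\overline Y^n)$ now \emph{decreases} and is bounded below, it converges a.s.\ and, by dominated convergence, in $\mathcal S^2\cap\mathcal A^2$ to some $\overline Y$; It\^o applied to $|\overline Y^p-\overline Y^n|^2$ with {\bf(C2)(ii),(iv)} and BDG shows $(\overline Z^n)$ is Cauchy in $\mathcal M^2(0,T,\R^m)$, with limit $\overline Z$, and $(\overline Y,\overline Z)\in\mathcal E_m(0,T)$. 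Passing to the limit in \eqref{eqn} reproduces Step~4, the only new point being that $\overline F_n(t,\overline Y^n_t,\overline Z^n_t)\to f(t,\overline Y_t,\overline Z_t)$ and $\overline K_n(t,\overline Y^n_t)\to k(t,\overline Y_t)$ $dt\otimes d\P$-a.e.\ now relies on the \emph{right}-continuity in {\bf(C2)(v')}, since $\overline Y^n_t\downarrow\overline Y_t$; dominated convergence (dominating function from {\bf(C2)(iii)} and the $L^2$ bound on $\overline Z^n$) and BDG for the martingale terms complete the identification. Maximality is the analog of Step~5: for an arbitrary solution $(Y,Z)$ of \eqref{a111} one shows by induction that $Y_t\le\overline Y^n_t$ for all $n$ — the base case by the dual of Lemma~\ref{l1}/Remark~\ref{rk} applied to $\overline Y^0-Y$ (whose forcing is nonnegative by {\bf(C2)(ii)}), the step via {\bf(C2)(vi')} and Theorem~\ref{l0a2} — and then lets $n\to\infty$.

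The main obstacle is not any single estimate but the consistent bookkeeping of the order reversals: one must verify that in the difference equations for $\overline Y^n-\overline Y^{n+1}$ (and for $\overline Y^0-Y$) the forcing terms are genuinely \emph{nonnegative}, so that the correct orientation of Lemma~\ref{l1} applies and delivers the decreasing property; that the continuous corrector still satisfies \eqref{ad3} after the substitution $h\mapsto\widehat h$; that the new lower bound $\underline Y^0\le\overline Y^n$ survives the iteration; and that each invocation of Theorems~\ref{l0a1} and \ref{l0a2} is in the alternative whose hypotheses (here ``$(Y^2,Z^2)$ is the maximal solution'') are actually met by $(\overline Y^n,\overline Z^n)$. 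Everything else is a routine rerun of the computations already carried out for $(\underline Y,\underline Z)$.
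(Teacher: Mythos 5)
Your overall architecture is the one the paper intends (the paper offers no proof of this Proposition beyond ``by similar procedures''), and several of your preparatory observations are sound: the dual of Lemma \ref{l1} obtained by negation is legitimate, and your check that $\widehat h(y,z)=-h(-y,-z)$ inherits relation \eqref{ad3} is correct. The genuine gap sits exactly at the point you defer to ``bookkeeping'': the nonnegativity of the forcing terms in the induction for part (i), and the corresponding driver comparison in the maximality step, do \emph{not} follow from {\bf(C2)(vi')} as stated. Concretely, with the scheme \eqref{eqn} the difference $D^n=\overline Y^n-\overline Y^{n+1}$ has driver $\widehat h(D^n,\overline Z^n-\overline Z^{n+1})+\phi^n$ with
\begin{eqnarray*}
\phi^n(s)=f(s,\overline Y_s^{n-1},\overline Z_s^{n-1})-f(s,\overline Y_s^{n},\overline Z_s^{n})
+h\big(\overline Y_s^{n}-\overline Y_s^{n-1},\overline Z_s^{n}-\overline Z_s^{n-1}\big),
\end{eqnarray*}
so $\phi^n\ge 0$ requires the \emph{lower} bound $f(s,\overline Y^{n-1},\overline Z^{n-1})-f(s,\overline Y^{n},\overline Z^{n})\ge -h(\overline Y^{n}-\overline Y^{n-1},\overline Z^{n}-\overline Z^{n-1})$ whenever $\overline Y^{n-1}\ge\overline Y^{n}$. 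Assumption {\bf(vi')} supplies only the \emph{upper} bound $f(s,\overline Y^{n-1},\overline Z^{n-1})-f(s,\overline Y^{n},\overline Z^{n})\le h(\overline Y^{n-1}-\overline Y^{n},\overline Z^{n-1}-\overline Z^{n})$; the two statements coincide only if $h$ is odd and the increment bound is two-sided, neither of which is assumed. The same wrong-direction problem recurs in your lower bound $\underline Y^0\le\overline Y^{n+1}$ and in the maximality induction: showing $f(s,Y_s,Z_s)\le\overline F_{n+1}(s,Y_s,Z_s)$ for a solution with $Y\le\overline Y^n$ is again the lower-bound form of the one-sided condition, not the inequality that {\bf(vi')} provides for $y\ge y'$.

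Everything you wrote does close if the one-sided condition is read as ``$f(t,y,z)-f(t,y',z')\le h(y-y',z-z')$ for all $y\le y'$'' (equivalently, $f(t,y,z)-f(t,y',z')\ge -h(y'-y,z'-z)$ for $y\ge y'$): then $\phi^n\ge0$, the domination step works, and the remainder of your transcription (uniform bounds via It\^o and Young, the Cauchy property of $(\overline Z^n)$, passage to the limit using right-continuity since $\overline Y^n\downarrow\overline Y$) is indeed a routine rerun of Steps 2--5 of Theorem \ref{te3}. So you must either justify that {\bf(vi')} is to be understood in this reversed form, or record that the induction does not close under {\bf(vi')} as printed. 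As written, the assertion that ``the order relations of {\bf(C2)(vi')} \dots again give $\phi^n,\psi^n\ge 0$'' is unjustified, and this is the one step of the dualization that is not mechanical --- it is precisely the step you flagged as the main obstacle and then did not carry out.
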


\bibliographystyle{plain}

\end{document}